\setlist[enumerate,1]{label={\arabic*.}}
\setlist[enumerate,2]{label={\roman*.}}
\setlist[enumerate,3]{label={\alph*.}}
\setlist[itemize]{nolistsep,noitemsep, topsep=0pt}
\setlist[enumerate]{nolistsep,noitemsep, topsep=0pt}
\newcommand{\cL}{\ensuremath{\mathcal L}}
\crefname{equation}{}{}
\crefname{subfigure}{Figure}{Figures}
\crefname{figure}{Figure}{Figures}
\crefname{question}{Question}{Questions}
\theoremstyle{plain}
\newtheorem{theorem}{Theorem}[section]
\newtheorem{proposition}[theorem]{Proposition}
\newtheorem{lemma}[theorem]{Lemma}
\newtheorem{corollary}[theorem]{Corollary}
\theoremstyle{definition}
\newtheorem{definition}[theorem]{Definition}
\newtheorem{remark}[theorem]{Remark}
\newtheorem{question}[theorem]{Question}
\DeclarePairedDelimiter{\parens}{(}{)}
\DeclarePairedDelimiter{\set}{\{}{\}}
\DeclarePairedDelimiter{\card}{\lvert}{\rvert}
\DeclarePairedDelimiter{\size}{\lvert}{\rvert}
\DeclarePairedDelimiter{\floor}{\lfloor}{\rfloor}
\DeclarePairedDelimiter{\ceil}{\lceil}{\rceil}
\newcommand{\ro}[2]{\ensuremath{r_{{\text odd}}(#1,#2)}}
\newcommand{\rod}[3]{\ensuremath{r_{{\text odd}}\parens*{#1,#2;#3}}}
\renewcommand{\phi}{\varphi}
\newcommand{\Erdos}{Erd\H{o}s}
\newcommand{\Gyarfas}{Gy\'arf\'as}
\title{Odd-Ramsey numbers of Hamilton cycles}
\tikzstyle{vertex}=[circle, draw, fill=black, inner sep=0pt, minimum size=4pt]
\newcommand{\vertex}{\node[vertex]}
\tikzstyle{smallvertex}=[circle, draw, fill=black, inner sep=0pt, minimum size=3pt]
\newcommand{\smallvertex}{\node[smallvertex]}
\newcommand{\widthedge}{2}
\renewcommand{\widthedge}{1.5}
\newcommand{\switch}[2]{
\begin{tikzpicture}[x=#1 cm, y=#1 cm]

    \vertex (u) at (180:1) [label=180:$u$]{};
    \vertex (v) at (90:1) [label=90:$v$]{};
    \vertex (x) at (0:1) [label=0:$x$]{};
    \vertex (y) at (-90:1) [label=-90:$y$]{};

    \path (v) edge[out=-110, in=80, looseness=2] node[right]{$P$} (y);

    \ifthenelse{#2=0}{
        \path
        (u) edge[blue,line width=\widthedge] (v)
        (v) edge[green,line width=\widthedge] (x)
        (x) edge[red,line width=\widthedge] (y)
        (y) edge[green,line width=\widthedge] (u);
    
    }{}

    \ifthenelse{#2=1}{
        \path
        (u) edge[blue,line width=\widthedge] (v)
        (x) edge[red,line width=\widthedge] (y);

    }{}    

    \ifthenelse{#2=2}{
        \path
        (v) edge[green,line width=\widthedge] (x)
        (y) edge[green,line width=\widthedge] (u);      
    }{}

\end{tikzpicture}
}
\tikzset{
    partial ellipse/.style args={#1:#2:#3}{
        insert path={+ (#1:#3) arc (#1:#2:#3)}
    }
}
\renewcommand{\widthedge}{1.5}
\newcommand{\inception}[2]{
\begin{tikzpicture}[x=#1 cm, y=#1 cm]

    %outer switches
    \ifthenelse{#2<1}{
        \pgfmathsetmacro\radius{ #1*2}
        \draw (0,0) [partial ellipse=70:-40:\radius cm and \radius cm];
        \draw (0,0) [partial ellipse=-40:-140:\radius cm and \radius cm];
        \draw (0,0) [partial ellipse=-140:-250:\radius cm and \radius cm];
        
        \vertex (b0) at (70:\radius cm) [label=90:$b_1$]{};
        \vertex (a0) at (110:\radius cm) [label=90:$a_1$]{};
        \path (b0) edge[line width=\widthedge] node[below]{$e_1$} (a0);
    }{
        \pgfmathsetmacro\size{ #1*1.8}
        \ifthenelse{#2<2}{
        \vertex (u1) at (180:\size cm) [label=180:$u_1$]{};
        \vertex (v1) at (90:\size cm) [label=90:$v_1$]{};
        \vertex (x1) at (0:\size cm) [label=0:$x_1$]{};
        \vertex (y1) at (-90:\size cm) [label=-90:$y_1$]{};}
        {\vertex (u1) at (180:\size cm) [label=180:]{};
        \vertex (v1) at (90:\size cm) [label=90:]{};
        \vertex (x1) at (0:\size cm) [label=0:]{};
        \vertex (y1) at (-90:\size cm) [label=-90:]{};}

        \pgfmathsetmacro\size{ #1*2}
        \ifthenelse{#2<2}{
        \vertex (b1) at (-30:\size cm) [label=0:$b_1$]{};
        \vertex (a1) at (210:\size cm) [label=180:$a_1$]{};}{
        \vertex (b1) at (-30:\size cm) [label=0:]{};
        \vertex (a1) at (210:\size cm) [label=180:]{};}
        
        \ifthenelse{#2>1}{
        \path 
        (u1) edge[blue,line width=1,dotted] (y1)
        (v1) edge[blue,line width=\widthedge] (u1)
        (x1) edge[blue,line width=1,dotted] (v1)
        (y1) edge[blue,line width=\widthedge] (x1);
         }{
        \path
        (u1) edge[blue,line width=1,dotted] (y1)
        (v1) edge[blue,line width=\widthedge] (u1)
        (x1) edge[blue,line width=1,dotted] (v1)
        (y1) edge[red,line width=\widthedge] (x1);}

        \path
        (u1) edge[orange, line width=\widthedge] (a1)
        (x1) edge[green, line width=\widthedge] (b1);
        
        \path (a1) edge[out=-80, in=-100, looseness=2] (b1);

        \pgfmathsetmacro\size{ #1*0.5}
        
        \ifthenelse{#2<3}{
        \path (v1) edge[line width=\widthedge] node[right]{$e_1$} (y1);}{}

        \ifthenelse{#2>2}{
        \pgfmathsetmacro\size{ #1*.75}
        \smallvertex (u2) at (90:\size cm) [label=95:]{};
        \smallvertex (v2) at (0:\size cm) [label=0:]{};
        \smallvertex (x2) at (-90:\size cm) [label=-85:]{};
        \smallvertex (y2) at (180:\size cm) [label=180:]{};

        \path
        (v1) edge[line width=\widthedge] (u2)
        (y1) edge[line width=\widthedge] (x2);     

        \path
        (u2) edge[green,line width=1,dotted] (y2)
        (v2) edge[green,line width=\widthedge] (u2)
        (x2) edge[green,line width=1,dotted] (v2)
        (y2) edge[blue,line width=\widthedge] (x2);

        \path (v2) edge[line width=.75] node[above=-2pt]{\small $e_1$} (y2);
        
        }{}
    }

\end{tikzpicture}
}
\renewcommand{\widthedge}{1.5}
\newcommand{\ProofBlowUp}[1]{
\begin{tikzpicture}[x=#1 cm, y=#1 cm]

    \vertex (v0) at (180:1) [label=180:$v_0$]{};
    
    \vertex (x) at (90:1) [label=90:$x$]{};
    \vertex (y) at (-90:1) [label=-90:$y$]{};
    
    \vertex (z) at (0:1) [label=0:$z$]{};

    \begin{scope}[on background layer]
        \draw[fill=gray!50, opacity=0.75] (0,0) ellipse (0.25 and 1.4) node[above=50pt,left=7pt]{$V_i$};
        \draw[fill=gray!50, opacity=0.75] (z) ellipse (0.25 and 1.4) node[above=50pt,right=7pt]{$V_j$};
    \end{scope}

    \path
    (v0) edge[red,line width=\widthedge] node[sloped,above]{colour $i$} (x)
    (v0) edge[red,line width=\widthedge] node[below]{$i$} (y);

    \path
    (z) edge[blue,dotted,line width=\widthedge] node[above=8pt]{$\phi(xz)$} (x)
    (z) edge[teal,dashed,line width=\widthedge] node[below=8pt]{$\phi(yz)$} (y);

\end{tikzpicture}}
\newcommand{\ProofMonoClique}[1]{
\begin{tikzpicture}[x=#1 cm, y=#1 cm]
    
    \vertex (x) at (135:1) [label=90:$x$]{};
    \vertex (y) at (-135:1) [label=-90:$y$]{};
    
    \vertex (u) at (45:1) [label=90:$u$]{};
    \vertex (v) at (-45:1) [label=-90:$v$]{};    

    \begin{scope}[on background layer]
        \draw[fill=gray!50,opacity=0.75] ({-cos(45)},0) ellipse (0.25 and 1.4) node[above=50pt,left=7pt]{$V_i$};
        \draw[fill=gray!50, opacity=0.75] ({cos(45)},0) ellipse (0.25 and 1.4) node[above=50pt,right=7pt]{$V_j$};
    \end{scope}

    \path
    (x) edge[red,line width=\widthedge]  (u)
    (y) edge[red,line width=\widthedge]  (v);

    \path
    (x) edge[blue,dotted,line width=\widthedge] node[left]{$\phi(xy)$} (y)
    (u) edge[teal,dashed,line width=\widthedge] node[right]{$\phi(uv)$} (v);

\end{tikzpicture}}
\tikzstyle{disc}=[circle, draw=gray, fill=gray!30, inner sep=0, minimum size=60pt,transform shape]
\newcommand{\HamcycleSwitchFree}[1]{

\begin{tikzpicture}[scale=#1]

      \pgfmathsetmacro\i{1}
      \pgfmathsetmacro\angle{90-(\i-1) * 360/6}
      \pgfmathsetmacro\rot{(1-\i) * 360/6}
      \begin{scope}[shift={(\angle:4)},rotate=\rot]
          \node[disc, label=above:$V_\i$] (V\i) at (0,0) {};
          \draw[thick,orange] plot[smooth] coordinates {(180:0.8) (140:0.6) (220:0.4) (70:0.8) (0:0.8)};
          \vertex (u\i) at (180:0.8) [label=-90:$u_\i$]{};
          \vertex (v\i) at (0:0.8) [label=-90:$v_\i$]{};
          \vertex (w\i) at (-90:0.8) [label=90:$w_\i$]{};
          \node at (110:0.6) {$Q_\i$};
      \end{scope}

      \pgfmathsetmacro\i{2}
      \pgfmathsetmacro\angle{90-(\i-1) * 360/6}
      \pgfmathsetmacro\rot{(1-\i) * 360/6}
      \begin{scope}[shift={(\angle:4)},rotate=\rot]
          \node[disc, label=above:$V_\i$] (V\i) at (0,0) {};
          \draw[thick,orange] plot[smooth] coordinates {(180:0.8) (140:0.6) (220:0.4) (70:0.8) (0:0.8)};
          \vertex (u\i) at (180:0.8) [label={[label distance=-3pt]-100:$u_\i$}]{};
          \vertex (v\i) at (0:0.8) [label={[label distance=-3pt]-45:$v_\i$}]{};
          \vertex (w\i) at (-90:0.8) [label={[label distance=-3pt]0:$w_\i$}]{};
          \node at (110:0.6) {$Q_\i$};
      \end{scope}

      \pgfmathsetmacro\i{3}
      \pgfmathsetmacro\angle{90-(\i-1) * 360/6}
      \pgfmathsetmacro\rot{(1-\i) * 360/6}
      \begin{scope}[shift={(\angle:4)},rotate=\rot]
          \node[disc, label=above:$V_\i$] (V\i) at (0,0) {};
          \draw[thick,orange] plot[smooth] coordinates {(180:0.8) (140:0.6) (220:0.4) (70:0.8) (0:0.8)};
          \vertex (u\i) at (180:0.8) [label={[label distance=-3pt]180:$u_\i$}]{};
          \vertex (v\i) at (0:0.8) [label={[label distance=-3pt]90:$v_\i$}]{};
          \vertex (w\i) at (-90:0.8) [label=135:$w_\i$]{};
          \node at (110:0.6) {$Q_\i$};
      \end{scope}

      \pgfmathsetmacro\i{4}
      \pgfmathsetmacro\angle{90-(\i-1) * 360/6}
      \pgfmathsetmacro\rot{(1-\i) * 360/6}
      \begin{scope}[shift={(\angle:4)},rotate=\rot]
          \coordinate (V\i) at (0,0);
          \vertex (u\i) at (180:0.8) [label=-90:$u_\i$]{};
          \vertex (w\i) at (-90:0.8) [label=90:$w_\i$]{};
      \end{scope}           

      \pgfmathsetmacro\i{4.5}
      \pgfmathsetmacro\angle{90-(\i-1) * 360/6}
      \pgfmathsetmacro\rot{(1-\i) * 360/6}
      \begin{scope}[shift={(\angle:4)},rotate=\rot]
          \node[transform shape] at (0,0) {\huge $\cdots\cdot\cdot$};
      \end{scope}  

      \pgfmathsetmacro\i{5}
      \pgfmathsetmacro\angle{90-(\i-1) * 360/6}
      \pgfmathsetmacro\rot{(1-\i) * 360/6}
      \begin{scope}[shift={(\angle:4)},rotate=\rot]
          \coordinate (Vprev) at (0,0);
          \vertex (vprev) at (0:0.8) [label=-90:$v_{s'-1}$]{};
          \vertex (wprev) at (-90:0.8) [label=-90:$w_{s'-1}$]{};
      \end{scope}        

      \pgfmathsetmacro\i{6}
      \pgfmathsetmacro\angle{90-(\i-1) * 360/6}
      \pgfmathsetmacro\rot{(1-\i) * 360/6}
      \begin{scope}[shift={(\angle:4)},rotate=\rot]
          \node[disc, label=above:$V_{s'}$] (Vs) at (0,0) {};
          \draw[thick,orange] plot[smooth] coordinates {(180:0.8) (140:0.6) (220:0.4) (70:0.8) (0:0.8)};
          \vertex (us) at (180:0.8) [label=0:$u_{s'}$]{};
          \vertex (vs) at (0:0.8) [label=-90:$v_{s'}$]{};
          \vertex (ws) at (-90:0.8) [label=90:$w_{s'}$]{};
          \node at (110:0.6) {$Q_{s'}$};
      \end{scope}

      \path 
        (v1) edge[thick,red] (u2)
        (v2) edge[thick,blue] (u3)
        (v3) edge[thick,teal] (u4)
        (vprev) edge[thick,blue] (us)
        (vs) edge[thick,teal] (w1)
        (w1) edge[thick,red] (w2)
        (w2) edge[thick,blue] (w3)
        (w3) edge[thick,teal] (w4)
        (wprev) edge[thick,blue] (ws)
        (ws) edge[thick,teal] (u1);

\end{tikzpicture}
}
\begin{document}

\author{
Simona Boyadzhiyska\,\footnotemark[1] \qquad 
Shagnik Das\,\footnotemark[2] \qquad
Thomas Lesgourgues\,\footnotemark[3] \\
Kalina Petrova\,\footnotemark[4]
}

\footnotetext[1]{HUN-REN Alfréd Rényi Institute of Mathematics, Budapest, Hungary.  Email: {\tt simona@renyi.hu}.}
\footnotetext[2]{Department of Mathematics, National Taiwan University, Taipei 10617, Taiwan. Email: {\tt shagnik@ntu.edu.tw}.}
\footnotetext[3]{Department of Combinatorics and Optimization, University of Waterloo, Canada. Email: {\tt tlesgourgues@uwaterloo.ca}.}
\footnotetext[4]{Institute of Science and Technology Austria (ISTA), Klosterneurburg 3400, Austria. Email: {\tt kalina.petrova@ist.ac.at}.}

\sloppy

\title{Odd-Ramsey numbers of Hamilton cycles}

\maketitle

\begin{abstract}
    The odd-Ramsey number $\ro{n}{H}$ of a graph $H$, as introduced by Alon in his work on graph-codes, is the minimum number of colours needed to edge-colour $K_n$ so that every copy of $H$ intersects some colour class in an odd number of edges. In this paper, we determine the odd-Ramsey number of Hamilton cycles up to a small multiplicative factor, proving that $\ro{n}{C_n} = \Theta(\sqrt{n})$. Our upper bound follows from an explicit finite-field construction, while the matching lower bound uses a combinatorial framework based on parity switches. We also initiate the study of odd-Ramsey numbers of Hamilton cycles in Dirac graphs, demonstrating that a small increase in the minimum degree beyond $n/2$ forces nontrivial odd-Ramsey numbers.
\end{abstract}

\section{Introduction}

Given graphs $G$ and $H$, the \emph{odd-Ramsey number} of  $H$ in $G$, denoted by $\ro{G}{H}$, is defined as the minimum number of colours $r$ in an edge-colouring $G = G_1 \cup G_2 \cup \hdots \cup G_r$ with the property that every copy of $H$ intersects some~$G_i$ in an odd number of edges. For simplicity, we say that every copy of $H$ has an \emph{odd colour class}, and when~$G$ is the complete graph~$K_n$, we often simplify our notation and write $\ro{n}{H}$ instead of~$\ro{K_n}{H}$. Note that, if~$G$ contains no copy of $H$, or if $H$ contains an odd number of edges, then we trivially have~$\ro{G}{H} = 1$.

Alon~\cite{alon2024graph} implicitly introduced the notion of odd-Ramsey numbers in his work on graph-codes. Given a graph $H$, an~\emph{$H$-code} is a family of graphs on a common set of vertices with the property that the symmetric difference of any two of its members is not isomorphic to $H$. Alon showed that, if $H$ has small odd-Ramsey number, then one can construct a relatively dense $H$-code.

As noted by Alon, the odd-Ramsey number of a graph is an interesting parameter to study in its own right, as it fits naturally into a sequence of Ramsey Theory variants. The classic Ramsey problem asks for the minimum number~$r(G,H)$ of colours needed to edge-colour $G$ so that every copy of $H$ receives at least two colours. Erd\H{o}s and Gy\'arf\'as~\cite{erdHos1997variant} generalised this by asking for the \emph{generalised Ramsey number} $f(G,H,q)$, the minimum number of colours needed to ensure that every copy of $H$ receives at least $q$ colours. This problem has attracted a great deal of attention over the past few decades, as researchers attempt to gain a deeper understanding of the Ramsey phenomenon; see, for instance,~\cite{axenovich2000generalizedbip,bennett2024generalized,bennett2024erdHos,conlon2015erdHos,eichhorn2000note,fox2009ramsey,mubayi1998edge}. The odd-Ramsey numbers, where we concern ourselves with the parity rather than the quantity of colours, are therefore closely related, both qualitatively and quantitatively. Indeed, one has the immediate upper bound $\ro{G}{H} \le f(G,H,\floor{\tfrac12 e(H)} +1)$, since if the number of colours in a copy of $H$ exceeds half the number of its edges, there must be a colour class of size one.

Despite only having been introduced recently, there is already a considerable number of results on odd-Ramsey numbers. In his original paper, Alon~\cite{alon2024graph} implicitly addressed the odd-Ramsey numbers of stars, matchings, and certain families of cliques. Several groups of authors have since considered the case of complete graphs, seeking to determine $r_{odd}(n,K_t)$. In 2023, Ge, Xu, and Zhang~\cite{ge2023new} conjectured that $r_{odd}(n,K_t) = n^{o(1)}$ for all $t$; before this conjecture, Cameron and Heath~\cite{cameron2023new} had proved that $r_{odd}(n,K_4) = n^{o(1)}$. Bennett, Heath, and Zerbib~\cite{bennett2023edgecoloring} and, independently, Ge, Xu, and Zhang~\cite{ge2023new} proved the $t=5$ case. Most recently, Yip~\cite{yip2024k8} showed in 2024 that the conjecture holds for $t=8$ (the cases $t=6,7$ are trivial as then $K_t$ has an odd number of edges). For an arbitrary graph $H$ with an even number of edges, Versteegen~\cite{versteegen2025upper} proved the general lower bound $r_{odd}(n,H) = \Omega( \log n)$, and showed that this could be strengthened to a polynomial lower bound if $H$ can be decomposed into independent sets in a particular way. Versteegen showed that almost all graphs with an even number of edges satisfy this property, and Janzer and Yip~\cite{janzer2024} proved sharp bounds on the probability of the random graph~$G(n,p)$ having such a decomposition. In a previous article~\cite{BDLP2024OddBip}, we studied the odd-Ramsey numbers of complete bipartite graphs. We obtained results for the family of all spanning complete bipartite graphs on $n$ vertices, its subfamilies, and for individual fixed graphs $K_{s,t}$. Finally, for a non-complete host graph $G$, Bennett, Heath, and Zerbib~\cite{bennett2023edgecoloring} proved in~2023 that $r_{odd}(K_{n,n},K_{2,2})=\frac12n+o(n)$. Very recently Crawford, Heath, Henderschedt, Schwieder, and Zerbib~\cite{crawford2025odd} extended this result to $r_{odd}(K_{n,n},K_{2,t})=\frac{1}{t}n+o(n)$ for all $t\geq 2$, and to some hypergraph variants.

In this paper, our target graph $H$ will be the \emph{Hamilton cycle} $C_n$. The study of Hamilton cycles is central to extremal graph theory, not least of all because Karp~\cite{karp2009reducibility} showed that the problem of determining whether or not a graph is Hamiltonian is NP-complete. Thus, as classifying Hamiltonian graphs is likely to be intractable, there is a significant body of research aiming to find sufficient conditions guaranteeing the existence of a Hamilton cycle in a graph. Perhaps the most well-known such result is due to Dirac~\cite{dirac1952some}, showing that every $n$-vertex graph $G$ with minimum degree at least $n/2$ contains a Hamilton cycle. As is by now standard in this area, we call an $n$-vertex graph with minimum degree at least $n/2$ a \emph{Dirac graph}. In the decades following Dirac's cornerstone work, researchers have sought to generalise and extend this result in various directions. For example, Nash-Williams~\cite{nash1971edge} proved that every Dirac graph on $n$ vertices contains linearly many edge-disjoint Hamilton cycles, which was sharpened to a tight bound of $(n-2)/8$ by Csaba, K\"{u}hn, Lo, Osthus, and Treglown~\cite{csaba2016proof}. We also know from the work of S{\'a}rk{\"o}zy, Selkow, and Szemer{\'e}di~\cite{sarkozy2003number} that every $n$-vertex Dirac graph contains at least $c^nn!$ many Hamilton cycles, for some constant~$c>0$.

When it comes to the odd-Ramsey number, note that if an $n$-vertex graph $G$ has no Hamilton cycle, or if $n$ is odd, then trivially $\ro{G}{C_n} = 1$. Thus, we shall restrict our discussion to the case when $n$ is even and the host graph~$G$ is Hamiltonian. As mentioned previously, the Erd\H{o}s--Gy\'arf\'as problem provides upper bounds on the odd-Ramsey problem, namely $\ro{n}{C_n} \le f(K_n, C_n, \tfrac12 n + 1)$. However, as we trivially have $f(G, H, q) \ge q$ (provided $H \subseteq G$), this can give a linear upper bound at best. In our main result, we show that $\ro{n}{C_n}$ is actually of much smaller order of magnitude. We then consider sparser host graphs --- in light of the aforementioned results about the plethora of Hamilton cycles in Dirac graphs, one can ask what happens in this sparser setting. More precisely, how large a minimum degree must a graph $G$ have to force $\ro{G}{C_n}$ to be large? We provide some initial results in this direction, and believe that this is a problem meriting further exploration.

\subsection{Results}

Our primary result determines the odd-Ramsey number of Hamilton cycles in complete graphs up to a constant factor.

\begin{theorem}\label{thm:complete_graphs}
For every even integer $n\geq 4$, we have 
\[\parens*{\frac{\sqrt{2}}{2}+o(1)} \sqrt{n} \leq r_{odd}(n,C_n) \leq \frac{3\sqrt{2}}{2} \sqrt{n},\]
where the $o(1)$ error term goes to $0$ as $n$ tends to infinity.
\end{theorem}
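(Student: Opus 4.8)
The plan is to establish the two inequalities by entirely different methods, matching the abstract's split into an algebraic construction and a switching argument. It is convenient to encode a colouring $\phi$ of $K_n$ with colour classes $\phi^{-1}(1), \ldots, \phi^{-1}(r)$ by the \emph{parity map} sending a Hamilton cycle $C$ to the vector $p(C) \in \F_2^r$ whose $c$-th coordinate is $|C \cap \phi^{-1}(c)| \bmod 2$. The colouring is valid exactly when $p(C) \neq 0$ for every Hamilton cycle $C$, so the upper bound asks for a colouring with few classes whose parity map avoids $0$, while the lower bound asks us to produce, for any colouring with too few classes, a Hamilton cycle $C$ with $p(C) = 0$. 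Since $C$ has $n$ edges and $n$ is even, every $p(C)$ lies in the even-weight subspace of $\F_2^r$, the common ambient space for both bounds.

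For the upper bound I would fix a prime power $q = (1 + o(1))\sqrt{n/2}$, so that $3q \le \tfrac{3\sqrt2}{2}\sqrt n$, and identify $V(K_n)$ with an $\F_q$-structure of size $\approx n$ (for instance $\F_q \times \F_q$ enlarged by a bounded factor). I would then partition the edges into three groups of about $q$ classes each, reading the class of an edge $xy$ off algebraic data of $x$ and $y$: one group recording the \emph{direction} of $x - y$, and the other two recording affine invariants that pin down the line through $x$ and $y$. The crucial lemma is that no Hamilton cycle can be even on all three groups at once. To prove it I would assume $p(C) = 0$ and translate evenness within each group into a system of $\F_2$-linear constraints on the edge set of $C$; the finite-field structure would be engineered so that any even subgraph satisfying all of them is forced to be \emph{degenerate} --- either disconnected or failing to span --- which contradicts $C$ being a single Hamilton cycle. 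Concretely, I expect the constraints to control the cyclic sequence of directions traversed by $C$ tightly enough that a spanning closed walk visiting each vertex once becomes impossible.

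For the lower bound I would argue via parity switches. Given a Hamilton cycle $C$ containing two independent edges $uv$ and $xy$ whose removal leaves a path, the $2$-opt move exchanging them for $vx$ and $yu$ yields another Hamilton cycle $C'$, and $p(C') - p(C)$ equals the $\F_2$-sum of the standard basis vectors indexed by the four colours $\phi(uv), \phi(xy), \phi(vx), \phi(yu)$. Starting from an arbitrary Hamilton cycle, I would use such switches to drive its parity vector to $0$ coordinate by coordinate, employing a nested family of gadgets (as suggested by the recursive configuration) so that fixing the parity of one colour does not disturb the colours already corrected. The number of colours that can be neutralised in this way is governed by how many vertex-disjoint, mutually non-interfering switch gadgets can be packed along a cycle on $n$ vertices; optimising this packing is what yields the threshold $(\tfrac{\sqrt2}{2} - o(1))\sqrt n$: once $r$ drops below it, there is enough room to zero out all $r$ coordinates and exhibit a Hamilton cycle with $p(C) = 0$, contradicting validity.

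The main obstacle on the lower-bound side is the bookkeeping of switch independence: a single $2$-opt move can alter up to four coordinates of $p$, and successive moves share vertices and edges, so I must design the gadgets so that their parity effects are controllable and essentially orthogonal, and then show the packing is tight enough to recover the exact constant $\tfrac{\sqrt2}{2}$ rather than a weaker $c\sqrt n$. On the upper-bound side the delicate point is choosing the algebraic invariants so that three bands of $q$ colours genuinely suffice: I must verify that the resulting $\F_2$-constraint system rules out all Hamilton cycles (not merely most), while keeping the total number of classes below $\tfrac{3\sqrt2}{2}\sqrt n$, including the routine adjustment from a convenient prime power $q$ back to a general even $n$.
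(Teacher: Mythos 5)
Both halves of your plan contain genuine gaps. On the upper bound, your ``three bands of $q$ colours'' scheme is not a construction: the key lemma (``no Hamilton cycle can be even on all three groups at once'') is exactly what needs to be proved, and your proposed mechanism --- that the $\F_2$-constraints force any even subgraph to be disconnected or non-spanning --- does not survive the first sanity check. A pure direction/Cayley colouring by $\vec u+\vec v$ carries no parity information on cycles at all, since every vertex contributes its label twice and the telescoping sum of directions around \emph{any} cycle is automatically $\vec 0$; so evenness in the ``direction'' band imposes no constraint, and it is unclear what your two ``affine invariant'' bands would add. The paper's construction (\cref{prop:UpperboundComplete}) instead uses $\F_2^t\times[m]$ with the direction colouring everywhere \emph{except} at one special vertex $(\vec 0,m)$, whose star is coloured by the second coordinate. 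That single modification is the whole point: the star colours force the two cycle-edges at the special vertex to go to neighbours $(\vec v_1,x)$, $(\vec v_2,x)$ with $\vec v_1\neq\vec v_2$, and then the telescoping sum over the remaining path equals $\vec v_1+\vec v_2\neq\vec 0$, so some direction colour is odd. Without an idea of this kind your upper bound does not get off the ground, even though your colour count $3q\approx\tfrac{3\sqrt2}{2}\sqrt n$ happens to match.

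On the lower bound, the fatal omission is the switch-free case. Your plan assumes that for each coordinate of $p(C)$ you wish to correct there is a $2$-opt gadget affecting the right pair of colours, and reduces the problem to packing vertex-disjoint gadgets along the cycle. But there are colourings with arbitrarily many colours and \emph{no} switches whatsoever (blow-ups of proper edge-colourings of $K_r$; see \cref{lem:noswitches} and the remark following it), and for these your method produces nothing. The paper's proof spends most of its effort precisely here: it iteratively harvests switches while it can (merging the corresponding colour pairs, with a linear-algebra argument over the hyperplane $\vec 1\cdot\vec x=0$ to keep the partial structure even-coloured), and when switches run out it invokes the structural characterisation of switch-free colourings to build an even-coloured spanning cycle directly, absorbing the set-aside switches via monochromatic bridges. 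Relatedly, your accounting of the threshold is off: packing $r$ vertex-disjoint $4$-cycles costs only $4r$ vertices, which would suggest a near-linear lower bound; the actual $n\gtrsim 2r^2$ requirement comes from needing $s^{(i)}$ fresh vertex-disjoint switches at \emph{each} of up to $r$ merging rounds (to guarantee a nontrivial $\F_2$-relation among their effect vectors), plus the $\Theta(r^{3/2})$ overhead of the cycle scaffolding. As written, neither inequality is established.
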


We next investigate the behaviour of odd-Ramsey numbers of Hamilton cycles in Dirac graphs.  Specifically, we are interested in how small the odd-Ramsey number can be among all host graphs of given minimum degree. To this end, we define
\begin{align*}
    \rod{n}{\delta}{C_n} = \min \{ \ro{G}{C_n}: v(G) = n, \delta(G) \ge \delta \}.
\end{align*}
Note that this new function is a generalisation of $\ro{n}{C_n}$, since $\ro{n}{C_n} = \rod{n}{n-1}{C_n}$. When $n$ is even, any Hamiltonian $n$-vertex graph $G$ must have $\ro{G}{C_n} \ge 2$. However, since Dirac graphs have many Hamilton cycles, it is natural to ask whether they must have much larger odd-Ramsey numbers. By \cref{thm:complete_graphs}, it follows that $\rod{n}{\delta}{C_n}\leq  \ro{n}{C_n}\leq \frac{3\sqrt{2}}{2}\sqrt{n}$ for any $\frac n2 \leq \delta \leq n-1$. However, our next statement shows that there exist more efficient colourings whenever~$\delta < n - \sqrt{2n}$; in particular, $\rod{n}{\tfrac{n}{2}}{C_n} = 2$.

\begin{proposition}\label{prop:UBSparse}
    For every even integer $n\geq 4$ and every $\delta\in \big[\frac{n}{2},n\big]$, we have
    \[r_{odd}(n,\delta;C_n)\leq \min\Big\{
    2\delta-n+2,\
    \frac{3\sqrt{2}}{2}\cdot \frac{\delta}{\sqrt{n}}+3
    \Big\}.\]
\end{proposition}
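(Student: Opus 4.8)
The plan is to prove the two bounds separately, exhibiting for each a host graph $G$ on $n$ vertices with $\delta(G)\ge \delta$ together with an explicit edge-colouring, and then to take whichever is smaller; since $\rod{n}{\delta}{C_n}$ is a \emph{minimum} over all such $G$, the two constructions together yield the claimed $\min$. Throughout I use that a $2$-colouring works iff there is an edge set meeting every Hamilton cycle in an odd number of edges (the two class sizes sum to the even number $n$, hence share parity), and more generally that we only need \emph{some} class to be odd.

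For the bound $2\delta-n+2$ I would take $G$ to be two cliques glued along a shared set. Write $s=2\delta-n+2$ and let $V=A'\cup S\cup B'$ with $|S|=s$ and $|A'|=|B'|=n-\delta-1$, where $A:=A'\cup S$ and $B:=B'\cup S$ are complete and there are no $A'$--$B'$ edges. Then every vertex of $A'\cup B'$ has degree exactly $(n-\delta-2)+s=\delta$ and every vertex of $S$ has degree $n-1$, so $\delta(G)=\delta$; moreover $G$ is Hamiltonian precisely when $s\ge 2$, i.e.\ $\delta\ge n/2$. The structural engine is that $S$ is a separator: deleting $S$ leaves the cliques $A'$ and $B'$, so every Hamilton cycle breaks into arcs, each lying entirely inside $A'$ or inside $B'$ and flanked by vertices of $S$. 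The plan is to use the $s$ colours indexed by $S$ and to argue from this arc decomposition that some class is always odd. The clean case is $\delta=n/2$, where $s=2$, $S=\{s_1,s_2\}$, and every Hamilton cycle is a single $A'$-arc followed by a single $B'$-arc; one checks that $s_1$ then has exactly one neighbour in $A'$, so colouring the $s_1$--$A'$ edges with one colour and everything else with the other forces that class to have size $1$, giving $\rod{n}{n/2}{C_n}=2$. The main obstacle is the general colouring: once $\delta>n/2$ an $S$-vertex can see $0$ or $2$ arc-endpoints in $A'$, so the single-vertex parity above breaks, and one must distribute the $s$ colours over the $S$-incident edges so that the (even) total of the per-colour counts cannot have all of its summands even.

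For the bound $\tfrac{3\sqrt2}{2}\cdot\tfrac{\delta}{\sqrt n}+3$ I would reduce to \cref{thm:complete_graphs} by a blow-up. Partition $V$ into $m\approx \delta^2/n$ parts and take $G$ to be the complete multipartite graph on these parts; balancing the part sizes gives minimum degree $n-n/m$, which one checks is $\ge\delta$ in the relevant range (equivalently $\delta^2(n-\delta)\ge n^2$, valid for large $n$). Running the finite-field colouring of \cref{thm:complete_graphs} on the reduced $K_m$ and pulling it back along the projection $G\to K_m$ costs $\approx \tfrac{3\sqrt2}{2}\sqrt m=\tfrac{3\sqrt2}{2}\,\delta/\sqrt n$ colours, with the additive $+3$ absorbing the rounding of $m$ and one spare colour. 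The hard part is the lift: a Hamilton cycle of the blow-up does not project to a Hamilton cycle of $K_m$ but to a \emph{closed walk} visiting each vertex of $K_m$ the same number of times, so the odd-class certificate of \cref{thm:complete_graphs} does not transfer verbatim. I expect the crux to be showing that the finite-field colouring still forces an odd class on these multi-visit walks (equivalently, on the corresponding elements of the cycle space of $K_m$), most plausibly by arranging the parts so that each is traversed as a single arc, making the reduced object genuinely a Hamilton cycle of $K_m$.

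Combining the two colourings then gives $\rod{n}{\delta}{C_n}\le \min\{\,2\delta-n+2,\ \tfrac{3\sqrt2}{2}\,\delta/\sqrt n+3\,\}$. As a sanity check, the second term drops below the trivial bound $\ro{n}{C_n}\le \tfrac{3\sqrt2}{2}\sqrt n$ exactly when $\tfrac{n-\delta}{\sqrt n}>\sqrt2$, i.e.\ $\delta<n-\sqrt{2n}$, matching the threshold highlighted just before the statement.
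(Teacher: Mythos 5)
Both halves of your plan stop short of an actual proof, and in each case the missing piece is the substantive one. For the first bound, your graph is in fact the same as the paper's (in \cref{prop:UB-2k}): with $k=\delta-\tfrac n2+1$, your separator $S$ is their set $C$ of $2k$ universal vertices and your $A',B'$ are their cliques $A,B$ of order $n-\delta-1$. But you explicitly leave open how to distribute the $s=2k$ colours once $\delta>n/2$, and that is the whole content of the proposition. The paper's colouring is asymmetric: edges from $c_i$ to $B$ get colour $i$, and \emph{every} other edge (inside $A$, inside $B$, inside $C$, and from $C$ to $A$) gets colour $1$. One then follows a Hamilton cycle starting from a vertex of $A$: the first vertex $b_0$ of $B$ encountered is entered from some $c_i$, whose other cycle-edge goes into $A\cup C$ and hence has colour $1$, so colour $i$ appears exactly once unless $i=1$; in that exceptional case the same argument applied to the \emph{last} vertex of $B$ produces a colour $j\neq 1$ appearing exactly once. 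Your symmetric ``colours indexed by $S$ on all $S$-incident edges'' idea does not obviously close, precisely because of the $0$-or-$2$ parity problem you yourself identify.

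For the second bound the gap is more serious: the blow-up approach does not work as stated. A Hamilton cycle of a complete multipartite graph projects to a closed walk on $K_m$, not to a Hamilton cycle, and you cannot ``arrange the parts so that each is traversed as a single arc'' --- the Hamilton cycle is chosen adversarially after the colouring is fixed and may re-enter each part many times. Worse, the certificate in \cref{prop:UpperboundComplete} hinges entirely on the single special vertex $(\vec 0,m)$ having degree two in the cycle, which forces its two incident colours from the palette $[m]$ to coincide and thereby pins down the endpoints of the remaining path as $(\vec v_1,x)$ and $(\vec v_2,x)$ with $\vec v_1\neq\vec v_2$; once that vertex is blown up into a part visited $n/m$ times, this mechanism disappears, and the na\"ive ``sum of colours'' of any closed walk in the Cayley part is automatically $\vec 0$, so it certifies nothing. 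The paper instead takes a \emph{subgraph} of the $K_n$ construction: fix a set $S\subseteq\mathbb F_2^t$ of $\lceil c2^t\rceil$ vectors containing $\vec 0$, join $(\vec u,x)$ to $(\vec v,y)$ if and only if $\vec u+\vec v\in S$, and restrict the special vertex's neighbourhood to the columns with $x\le\lceil cm\rceil$. Every Hamilton cycle of this subgraph is a Hamilton cycle of the coloured $K_n$, so the old certificate applies verbatim, while only $\lceil c2^t\rceil+\lceil cm\rceil-1$ colours actually occur. Subgraph, not quotient, is the right move here.
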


In our final result, we show that while minimum degree $n/2$ is not enough to force a nontrivial odd-Ramsey number, a very small increase in the minimum degree does the trick.
\begin{theorem}\label{thm:Dirac+const}
    For every even integer $n\geq 4$, we have $\rod{n}{\frac n2+4}{C_n}\geq 3$.
\end{theorem}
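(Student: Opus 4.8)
The plan is to work over $\mathbb{F}_2$ and argue by contradiction. Fix any $n$-vertex graph $G$ with $\delta(G)\ge \tfrac n2+4$ (for $n\le 8$ we have $\tfrac n2+4>n-1$, so no such graph exists and the statement is vacuous; thus assume $n\ge 10$, in which case every two vertices of $G$ have at least $2\delta(G)-n\ge 8$ common neighbours). A $2$-colouring of $E(G)$ amounts to a choice of one colour class $F\subseteq E(G)$, and since $n$ is even the two classes of any Hamilton cycle $C$ have the same parity; hence $C$ has an odd colour class if and only if $|E(C)\cap F|$ is odd. So $\ro{G}{C_n}\le 2$ would yield a set $F$ with $|E(C)\cap F|$ odd for \emph{every} Hamilton cycle $C$. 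Writing $\chi=\mathbf{1}_F\in\mathbb{F}_2^{E(G)}$ and $\langle C,\chi\rangle=\sum_{e\in E(C)}\chi_e$, this says $\langle C,\chi\rangle=1$ for all Hamilton cycles $C$, and I will show this is impossible.

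The key tool is the parity switch. If $C=v_1v_2\cdots v_n$ is a Hamilton cycle and $v_iv_j,\,v_{i+1}v_{j+1}\in E(G)$, then replacing the two disjoint edges $v_iv_{i+1},\,v_jv_{j+1}$ by these chords yields another Hamilton cycle $C'$ of $G$ (a single cycle, obtained by reversing the segment between the two edges). Their symmetric difference is the $4$-cycle $Z=v_iv_{i+1}v_{j+1}v_j$, so $\langle C,\chi\rangle+\langle C',\chi\rangle=\langle Z,\chi\rangle$. As both $C$ and $C'$ are Hamilton cycles, the assumption forces $\langle Z,\chi\rangle=0$: every $4$-cycle arising as a switch difference has even $F$-weight.

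Next I would convert these $4$-cycle relations into structural information about $\chi$. A switch $4$-cycle on vertices $a,b,a',d$ (edges $ab,\,ba',\,a'd,\,da$) gives $\chi_{ab}+\chi_{ad}=\chi_{a'b}+\chi_{a'd}$; that is, $\chi_{ab}+\chi_{ad}$ is independent of which common neighbour $a$ of $b$ and $d$ is chosen. The plan is to realise enough such $4$-cycles as switch differences: for fixed $b,d$ and common neighbours $a,a'$, one needs a Hamilton cycle through the disjoint edges $ab,\,a'd$ in a prescribed cyclic orientation. Here I would use that $G$ is Hamilton-connected (as $\delta\ge\frac{n+1}{2}$) together with the fact that every pair has at least $8$ common neighbours, giving the slack to find such cycles and to avoid collisions. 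Granting this, $\lambda(b,d):=\chi_{ab}+\chi_{ad}$ is a well-defined symmetric function with $\lambda(b,b)=0$ and $\lambda(b,d)+\lambda(d,e)=\lambda(b,e)$; propagating across the (connected) common-neighbour structure produces a potential $g\colon V\to\mathbb{F}_2$ with $\lambda(b,d)=g(b)+g(d)$, whence, using the symmetry of $\chi$, a constant $c\in\mathbb{F}_2$ with
\[\chi_{uv}=g(u)+g(v)+c\qquad\text{for all }uv\in E(G).\]

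Finally, for any Hamilton cycle $C=v_1\cdots v_n$,
\[\langle C,\chi\rangle=\sum_{i=1}^{n}\bigl(g(v_i)+g(v_{i+1})+c\bigr)=2\sum_{i=1}^{n}g(v_i)+nc=nc=0,\]
because $n$ is even. This contradicts $\langle C,\chi\rangle=1$, proving $\ro{G}{C_n}\ge 3$; as $G$ was arbitrary, $\rod{n}{\frac n2+4}{C_n}\ge 3$. I expect the main obstacle to be the realisation step in the third paragraph, namely guaranteeing that enough quadruples $(a,b,a',d)$ genuinely occur as switch differences (with the correct orientation) so that the potential $g$ is defined consistently on all of $V$. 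This is precisely where the hypothesis $\delta\ge\frac n2+4$ — equivalently, at least $8$ common neighbours per pair — enters, and it is consistent with $\rod{n}{n/2}{C_n}=2$: at minimum degree exactly $n/2$ two vertices may share no common neighbour, so the switches this argument relies on need not exist.
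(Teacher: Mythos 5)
Your overall strategy is the same as the paper's: interpret the two-colouring as a vector $\chi\in\mathbb{F}_2^{E(G)}$, use $4$-cycles realised as symmetric differences of two Hamilton cycles to show that $\lambda(b,d):=\chi_{ab}+\chi_{ad}$ is independent of the common neighbour $a$ (the paper phrases this as every pair of vertices either ``agreeing'' or ``disagreeing''), and then finish with a parity count over a Hamilton cycle. The realisation step you flag as the main obstacle is in fact fine and is carried out in the paper as \cref{prop:C4Switch}: for $\delta(G)\ge \frac n2+2$ one routes a path of length at most two between one pair of opposite vertices of the $4$-cycle (\cref{lem:ShortPaths}) and a Hamilton path of the rest between the other pair (\cref{cor:HamiltonPath}, via Ore's theorem), so \emph{every} $4$-cycle of $G$ is a switch difference and no orientation issues arise.

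The genuine gap is elsewhere: the additivity $\lambda(b,d)+\lambda(d,e)=\lambda(b,e)$, which you need to produce the potential $g$, does \emph{not} follow from the $4$-cycle relations. Those relations only give $\lambda(b,d)+\lambda(d,e)=\lambda(b,e)$ when $b,d,e$ have a \emph{single} common neighbour $a$, since then all three values can be computed at $a$ and telescope; but with $\delta(G)\ge\frac n2+4$ three vertices can jointly miss up to $3(\frac n2-5)>n-3$ vertices and need not have any common neighbour, so the triangle cocycle condition is not covered. Without it, $\lambda$ need not be of the form $g(b)+g(d)$ and your final computation does not get off the ground. The paper closes exactly this hole with a second gadget: if transitivity of ``agreeing'' failed for $x,y,z$, one picks \emph{distinct} common neighbours $u$ of $\{x,y\}$, $v$ of $\{y,z\}$, $w$ of $\{x,z\}$, obtaining a $6$-cycle $xuyvzwx$ with $\langle C_6,\chi\rangle=\lambda(x,y)+\lambda(y,z)+\lambda(z,x)=1$, and then shows (\cref{prop:C6Switch}) that any such $6$-cycle can also be realised as the symmetric difference of two Hamilton cycles --- this is where the full hypothesis $\delta\ge\frac n2+4$ (rather than $\frac n2+2$) is actually consumed, since one must now route \emph{two} short paths before the long one. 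The same $C_6$ argument is needed once more to rule out three pairwise-disagreeing equivalence classes. So your proposal is a correct skeleton of the paper's proof, but it is missing the $C_6$-switch idea, which is the step that makes the potential $g$ exist and is the reason the theorem is stated at minimum degree $\frac n2+4$.
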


While these results shed some light on the function $\rod{n}{\delta}{C_n}$, its true behaviour is far from being known, and in~\cref{sec:ConcludingRemarks} we state  several open questions that we would like to see solved.

\paragraph{Notation}
Our notation is mostly standard. Unless otherwise specified, the term colouring refers to an edge-colouring and an $r$-colouring uses the colour-palette $[r]$. For brevity, we sometimes refer to the edges of some colour~$c$ as \emph{$c$-edges}. For the sake of simplicity, we say that a subgraph of an edge-coloured graph $G$ is \emph{odd-coloured} if it has an odd colour class, and \emph{even-coloured} otherwise. 
The \emph{length} of a path is the number of edges it contains. Given vertices $v_1,v_2,\ldots,v_k$, we denote by $v_1v_2\ldots v_k$ the path with edge-set $\set*{v_i v_{i+1} \colon i\in[k-1]}$, and by $v_1v_2\ldots v_kv_1$ the cycle with edge-set $\set*{ v_i v_{i+1} \colon i \in [k-1]}\cup\set{ v_k v_1 }$. If $P$ is a path between the vertices $v_j$ and $v_{j + \ell}$, we write $v_1v_2\ldots v_{j-1}v_jPv_{j+\ell}v_{j+\ell+1}\ldots v_k$ for the path $v_1v_2\ldots v_k$. 
Given vertices $a,b$, an \emph{$\set{a,b}$-path} is a path between its endpoints $a$ and $b$. A {\em cherry} is a  path of length two. Given two not necessarily disjoint subsets of vertices~$U,W\subseteq V(G)$, we write $(U,W)$ for the set of edges in $G$ with one endpoint in $U$ and one endpoint in $V$.

\paragraph{Organisation of the paper.} We prove the upper bound of~\cref{thm:complete_graphs} in~\cref{sec:UpperBound}, and its lower bound in~\cref{sec:LowerBound}.~\cref{sec:SparserGraphs} is devoted to the study of sparser host graphs and the proofs of~\cref{prop:UBSparse,thm:Dirac+const}. In \cref{sec:ConcludingRemarks}, we propose a number of open problems and directions for further study in this area.

\section{Upper bound for complete graphs}\label{sec:UpperBound}

The upper bound from~\cref{thm:complete_graphs} is a direct consequence of the following proposition.

\begin{proposition}\label{prop:UpperboundComplete}
    For any integers $m\geq 1$ and $t\geq0$, we have $r_{odd}(m2^t, C_{m2^t}) < 2^t + m$. It follows that, if $n=4^t$ for some integer $t\geq1$, we have $r_{odd}(n, C_{n}) < 2\sqrt{n}$. In general, for every even integer $n$, we have $r_{odd}(n, C_n) \leq \frac{3\sqrt{2}}{2} \sqrt{n}$. 
\end{proposition}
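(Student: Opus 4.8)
The plan is to prove the first inequality, which is the substantive claim, and then deduce the other two statements by choosing the parameters $m,t$ well. For $n=4^t$ I would apply the first bound with $m=2^t$, so that $m2^t=4^t=n$ and the number of colours is less than $2^t+m=2\cdot 2^t=2\sqrt n$. For a general even $n$ the difficulty is that $n$ need not be of the form $m2^t$ with $2^t\approx\sqrt n$ (e.g.\ $n=2\cdot(\text{odd})$ has no useful power-of-two divisor), so instead I would pick the power of two $2^t$ minimising $2^t+\ceil*{n/2^t}$, partition the vertex set into $m=\ceil*{n/2^t}$ blocks each of size at most $2^t$, and apply a block-size-robust version of the construction (each block identified with a \emph{subset} of $\F_{2^t}$). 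Optimising $2^t+n/2^t$ over powers of two, the ratio to $\sqrt n$ is largest when $\sqrt n$ lies geometrically halfway between consecutive powers of two, i.e.\ $n=2^{2k+1}$: there $2^t=2^k$ gives $2^t+n/2^t=2^k+2^{k+1}=3\cdot 2^k=\tfrac{3}{\sqrt2}\sqrt n=\tfrac{3\sqrt2}{2}\sqrt n$, and every other $n$ is strictly better. The ceilings together with the strict inequality in the first bound absorb the lower-order terms, yielding $\ro{n}{C_n}\le \tfrac{3\sqrt2}{2}\sqrt n$.

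For the main bound I would give an explicit colouring of $K_{m2^t}$ built from the field $\F_{2^t}$. Identify the vertex set with $[m]\times\F_{2^t}$, viewed as $m$ blocks, each a copy of $\F_{2^t}$. The natural first ingredient is a \emph{difference} colouring: colour an edge between $(i,x)$ and $(j,y)$ by $x+y\in\F_{2^t}$ (refined by whether $i=j$), so that each colour class is a union of perfect matchings and its intersection with a cycle is transparent; this costs on the order of $2^t$ colours. I would complement it with a second palette of about $m$ colours tied to the block structure, designed to produce \emph{singleton} colour classes (a root/star-type colouring being the cleanest source of these), so that the total palette has size less than $2^t+m$. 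A convenient way to organise everything is induction on $t$: the base case $t=0$ is $K_m$, coloured by a root colouring (all edges at a fixed vertex get distinct colours, all remaining edges get one further colour, using $m$ colours), and each doubling step $m2^{j}\to m2^{j+1}$ passes to the quadratic extension $\F_{2^{j+1}}\supset\F_{2^{j}}$ and spends $2^{j}$ new colours on the bipartite part joining the two copies; telescoping $\sum_{j<t}2^{j}=2^{t}-1$ gives a palette of size at most $m+2^{t}-1<2^{t}+m$.

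The heart of the argument is showing that no Hamilton cycle is even-coloured, and this is where I expect the main obstacle. It is clarifying to read the condition over $\F_2$: writing each colour class as its indicator vector in $\F_2^{E(K_N)}$, a subgraph is even-coloured precisely when it is orthogonal to every colour class, so the aim is that the small-codimension orthogonal complement of the span of the colour classes contains no Hamilton-cycle vector. The difference palette alone cannot achieve this: a cycle whose edges pair up into equal differences — such as the ``alternating'' four-cycle between two blocks, in which every difference value occurs an even number of times — is orthogonal to every difference class, hence even in that palette. These \emph{balanced} cycles are exactly the obstruction, and breaking them is the job of the auxiliary (root/block) palette, whose singleton classes are hit an odd number of times by any cycle forced through the corresponding edges. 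The crux is therefore the case analysis: assume a Hamilton cycle $H$ that is even in every colour, use evenness in the difference palette to force $H$ to pair its edges by difference, and then contradict evenness in the auxiliary palette using the fact that $H$ is a single \emph{spanning} cycle and so cannot decompose into the block-local, difference-balanced pieces that evenness would require. Making this dichotomy precise — ensuring the two palettes interact correctly, and checking that the argument survives blocks of size smaller than $2^t$, as needed for the general-$n$ reduction — is where the real effort lies.
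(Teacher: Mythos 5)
Your construction is, in outline, the paper's: the vertex set is $[m]\times\mathbb{F}_2^t$, most edges get the ``difference'' colour $\vec u+\vec v\in\mathbb{F}_2^t$, a distinguished root vertex carries roughly $m$ extra block-indexed colours, and the total is $m+2^t-1<2^t+m$; the numerical deductions (taking $m=2^t$ for $n=4^t$, and optimising $x+1/x$ over $x=\sqrt n/2^t\in[\tfrac{\sqrt2}{2},\sqrt2]$ for general $n$) also match the paper exactly. But the part you defer as ``where the real effort lies'' is precisely the content of the proposition, and your sketch of it does not contain the argument that makes it work. The mechanism is: (i) each auxiliary colour $x\in\{2,\dots,m\}$ appears \emph{only} on edges at the root, so a Hamilton cycle $C$, which uses exactly two edges at the root, would meet such a class in exactly one edge unless its two root-edges carry the \emph{same} colour; this forces the two neighbours of the root on $C$ to be $(\vec v_1,x)$ and $(\vec v_2,x)$ in the same block, hence with $\vec v_1\neq\vec v_2$; (ii) deleting the root leaves a path $P$ all of whose edges are difference-coloured, and the sum of the colours over $E(P)$ telescopes (each internal vertex contributes $2\vec u=\vec 0$) to $\vec v_1+\vec v_2\neq\vec 0$, whereas evenness of every difference class would force this sum to be $\vec 0$. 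Your sketch reverses the roles of the two palettes and never identifies the telescoping identity or the fact that the root must be excised for it to be nondegenerate (for a full cycle the telescoped sum is always $\vec 0$, so the difference palette alone says nothing); without these two observations the ``dichotomy'' you describe has no proof.

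Two further points. First, your proposed inductive organisation (doubling $m2^j\to m2^{j+1}$ via the quadratic extension, spending $2^j$ new colours on the bipartite part) is not sound as stated: a Hamilton cycle of $K_{m2^{j+1}}$ does not restrict to Hamilton cycles of the two halves, so the induction hypothesis cannot be invoked, and reusing the old colours inside both halves destroys any parity information you might hope to propagate. The paper's construction is a single non-recursive colouring. Second, the general-$n$ case does not require a ``block-size-robust'' variant: one simply embeds the $n$ vertices into the $m2^t$ vertices of the full construction so that the root is in the image, and observes that the argument above applies verbatim to any cycle through the root, not just to spanning ones.
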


\begin{proof}
    Let $m, t$ be positive integers and assume first that we have~$n = m2^t$. We take the vertex set of the complete graph~$K_n$ to be ${\mathbb{F}_2^t} \times [m]$, and we colour its edges with the~$2^t + m-1$ colours in~$(\mathbb{F}_2^t \cup [m])\setminus\{1\}$ as follows. For every~$\vec{v}\in\mathbb{F}_2^t$ and every~$x\in[m]$, we start by colouring the edge between~$(\vec{0}, m)$ and~$(\vec{v}, x)$ with the colour~$x$.  We colour all other edges between~$(\vec{v}_1, x_1)$ and $(\vec{v}_2, x_2)$ with~$\vec{v}_1 + \vec{v}_2 \in \mathbb{F}_2^t$. We then remove the colour $1$ by identifying it with the colour $\vec{0}$, that is, for every $\vec{v}\in\mathbb{F}_2^t$, the edge between~$(\vec{0}, m)$ and~$(\vec{v}, 1)$ is given colour~$\vec{0}$ instead of $1$.
    
    Suppose for a contradiction that there exists an even-coloured Hamilton cycle $C$ in this colouring. Assume first that the two edges incident to $(\vec{0},m)$ in $C$ are coloured with distinct colours. At least one such edge is coloured with~$x\in\{2,\ldots,m\}$, and this colour does not appear on any other edge in $C$, a contradiction. It follows that the two edges of $C$ incident to $(\vec{0}, m)$ must be of the same colour. That is, there is some $x \in [m]$, and distinct~$\vec{v}_1, \vec{v}_2 \in \mathbb{F}_2^t$ such that $(\vec{0}, m)$ is neighboured by $(\vec{v}_1, x)$ and $(\vec{v}_2, x)$ on $C$. Let $P$ be the path between~$(\vec{v}_1, x)$ and~$(\vec{v}_2, x)$ obtained by deleting $(\vec{0}, m)$ from $C$.

    If $C$ was even-coloured, then $P$ must be as well. Thus, the sum of the colours appearing on the edges of $P$ (which are elements of $\mathbb{F}_2^t$) must be $\vec{0}$. However, this sum is $\sum_{ \{(\vec{u}_1, x_1), (\vec{u}_2, x_2) \} \in E(P)} (\vec{u}_1 + \vec{u}_2)$, and every internal vertex~$(\vec{u}, x)$ contributes $2 \vec{u} = \vec{0}$. Thus, the sum is equal to $\vec{v}_1 + \vec{v}_2$, but we know that $\vec{v}_1\neq \vec{v}_2$, giving the desired contradiction. We obtain $r_{odd}(m2^t, C_{m2^t}) \leq 2^t + m-1$, as desired.\smallskip
    
    Now, if $n=4^t$, we take $m=2^t$, so that $n=m2^t$. The result then follows, as $r_{odd}(m2^t, C_{m2^t}) < 2^t + m = 2\cdot2^{t}=2\sqrt{n}$.
    Finally, to derive the bound for general $n$, let $t \in \set{ \floor*{\frac12 \log_2 n}, \ceil*{\frac12 \log_2 n}}$ be the integer closest to $\frac12 \log_2 n$, and let~$m = \ceil*{n/2^t}$.  Observe that $n \le m2^t$ and $\frac{\sqrt{n}}{2^t} \in \left[ \frac{\sqrt{2}}{2}, \sqrt{2} \right]$. As $x + \frac{1}{x} \le \frac{3\sqrt{2}}{2}$ for $x$ in this interval, we have 
    \[2^t+m -1\leq \Big(\frac{2^t}{\sqrt{n}}+\frac{\sqrt{n}}{2^t}\Big) \sqrt{n} \leq \frac{3\sqrt{2}}{2} \sqrt{n}.\]
    We can thus embed the $n$ vertices of $K_n$ into the $m2^t$ vertices of the above construction, ensuring that $(\vec{0}, m)$ is in the image. The above proof then shows that there is no even-coloured Hamilton cycle.    
\end{proof}

We observe that the constant  $\frac{3\sqrt{2}}{2}$ is best possible with this construction, as it is attained when~$n=2^{2t+1}$ for some integer $t$.

\section{Lower bound for complete graphs}\label{sec:LowerBound}

In this section we prove the lower bound of \cref{thm:complete_graphs}. We begin by introducing our main tool, \emph{switches}, and give an outline of the proof in Section~\ref{subsec:outline}. In Section~\ref{subsec:auxiliary}, we state and prove some helpful lemmata, and in Section~\ref{subsec:proof_lower_bound}, we provide the proof of our lower bound.

\subsection{Main tool and proof outline}
\label{subsec:outline}

Our proof makes extensive use of switches, which allow us to flip the parities of colours, one pair at a time. Similar gadgets were used, for instance, in~\cite{CNP24}.
\begin{definition}[Switch]
    Given an $r$-colouring of $K_n$, let $c_1\neq c_2$ be two colours. A  {\em $\set{c_1,c_2}$-switch} is a four-cycle~$S = uvxyu$ such that the number of $c$-edges  in $S$ is odd if $c \in \{c_1, c_2\}$, and even otherwise. 
    A four-cycle is a \emph{switch} if it is a $\set{c_1,c_2}$-switch for some distinct colours $c_1,c_2$.
\end{definition}

A \{red,blue\}-switch is illustrated in \cref{fig:switch}. 
Let $S =uvxyu$ be a switch and~$P$ be a path between~$v$ and~$y$ containing neither~$u$ nor~$x$. Then, as illustrated in~\cref{fig:switchoption1,fig:switchoption2}, we can find not one but two $\set{u,x}$-paths containing the path~$P$ and the vertices of~$S$: $uvPyx$ and $uyPvx$. Crucially, the parity of the number of $c_1$-edges and $c_2$-edges changes between the two paths, while the parities of all other colours remains the same.  
We introduce some terminology to allow us to refer to these different ways of traversing the edges of the switch.

\begin{definition}[Base/flipped matchings]
    Given an $r$-colouring of $K_n$ and a switch $S=uvxyu$, we call the matching~$\set{uv, xy}$ the \emph{base matching} of $S$, and the matching $\set{uy,vx}$ the \emph{flipped matching} of $S$.
\end{definition}

\def\sizeGraph{1.5}
\begin{figure}[ht]
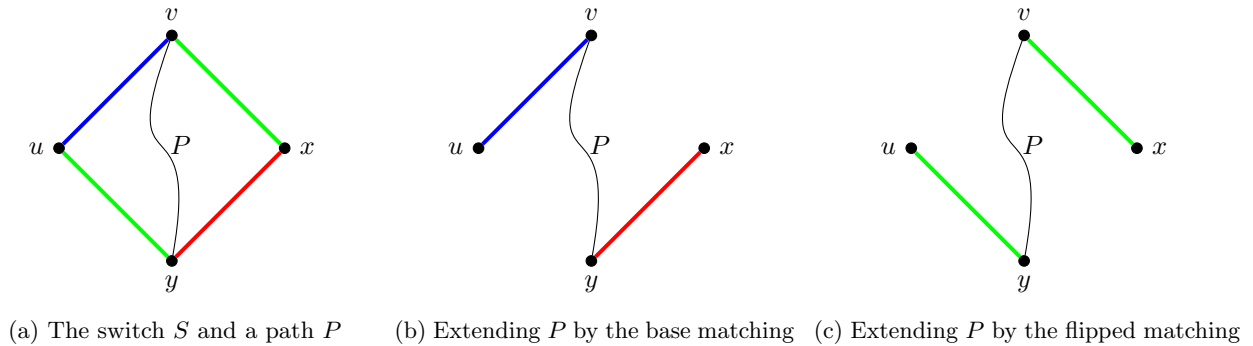

    \centering

    \begin{subfigure}{0.3\textwidth}
        \centering
        \switch{\sizeGraph}{0}
        \caption{The switch $S$ and a path $P$}
        \label{fig:switch}
    \end{subfigure}%
    \hspace{0.3cm}%
    \begin{subfigure}{0.3\textwidth}
        \centering
        \switch{\sizeGraph}{1}
        \caption{Extending $P$ by the base matching}
        \label{fig:switchoption1}
    \end{subfigure}%
    \hspace{0.3cm}%
    \begin{subfigure}{0.32\textwidth}
        \centering
        \switch{\sizeGraph}{2}
        \caption{Extending $P$ by the flipped matching}
        \label{fig:switchoption2}
    \end{subfigure}     
    \caption{Using a switch $S$}
    \label{fig:UseSwitch}
\end{figure}

In order to prove a lower bound on $\ro{n}{C_n}$, we consider an arbitrary $r$-colouring of $K_n$ for some $r$ and show that it must contain an even-coloured Hamilton cycle. This is where switches come in very handy. On the one hand, if we can find switches for many pairs of colours and manage to embed them into a Hamilton cycle, we will be able to use them to ensure the colours appear with the correct parity. On the other hand, we will see that a lack of switches gives us enough structural information about the colouring to directly construct an even-coloured Hamilton cycle.
In practice, making this work requires some additional ideas and more technical details. In what follows, we give a more in-depth proof overview.

We start by iteratively finding vertex-disjoint switches and setting them aside, until none are left. Recall that a~$\{c_1,c_2\}$-switch enables us to change the parities of the colours $c_1$ and $c_2$ simultaneously. Whenever we find and set aside a~$\{c_1,c_2\}$-switch $S$, we \emph{merge} the colours $c_1$ and $c_2$ and treat them as the same colour until the final step of the proof. The problem is now reduced to finding an even-coloured Hamilton cycle (with respect to this new colouring) that passes through the base matching of $S$. At the end, we can \emph{unmerge} the colours $c_1$ and $c_2$. Since they each appear with the same parity on our Hamilton cycle, if they both appear an odd number of times, we switch to traversing the flipped matching of $S$ instead, fixing their parities.

If we find sufficiently many vertex-disjoint switches to iteratively merge all colours into one single colour, we are done, as we can then pick any Hamilton cycle that passes through all the base matchings of these switches and iteratively unmerge all colours (in reverse order) as explained above. It remains to discuss what to do if we cannot find enough switches to do that. Indeed, our initial colouring may even have no switches at all.

To handle this case, we prove an auxiliary result stating that any switch-free $r$-colouring of a complete graph exhibits a very particular structure. Namely, all but one of its vertices can be partitioned into at most $r$ monochromatic cliques of the same colour such that the bipartite graphs between these cliques are monochromatic. It is not too hard to see that such a structure has an even-coloured spanning cycle. To extend this to an even-coloured Hamilton cycle in $K_n$, we need to incorporate the switches we previously set aside. We will then be able to use these switches to unmerge any colours we merged before arriving at our switch-free colouring.

Unfortunately, it may not always be possible to embed the switches into the cycle without introducing an odd colour. For instance, it may be that one of the endpoints of a switch is only incident to edges of a single colour, and that colour does not appear anywhere else in the graph. Absorbing this switch into our cycle would then require us to use that colour precisely once, which we are not allowed to do. Thus, we cannot simply find any set of switches and hope to be able to embed them into the almost-spanning cycle.

Instead, we choose the switches we set aside carefully and incorporate them into a special structure that we refer to as a \emph{SPICy} (short for \emph{Switches Placed in Cycles}). A SPICy consists of a set of vertex-disjoint cycles with some switches embedded in them. We maintain the invariant that the SPICy (including the base matchings of the embedded switches) always has an even-coloured $2$-factor, which we show is possible via a linear-algebraic argument.
The SPICy makes it easy to incorporate the switches into our cycle --- using the pigeonhole principle, we find some \emph{bridges}, which are monochromatic pairs of edges connecting each cycle of the SPICy to the almost-spanning cycle in an appropriate way.

Throughout this process, we will have to perform some ``cleaning up'' steps, which will involve temporarily getting rid of some vertices. At the end, we will absorb those into the almost-spanning cycle using monochromatic cherries at carefully chosen places.

In summary, with the help of the SPICy, we are able to absorb the switches into our almost-spanning cycle, obtaining an even-coloured Hamilton cycle. As previously mentioned, we finish the proof by unmerging all merged colours and fixing their parities as needed via the switches.

\subsection{Auxiliary results}
\label{subsec:auxiliary}

A key step in our proof of the lower bound of~\cref{thm:complete_graphs} is the following statement, proving that any coloured complete graph containing no switches exhibits a specific structure.

\begin{lemma}\label{lem:noswitches}
    Let $n,r\geq 1$ be integers, $G = K_n$, $v_0\in V(G)$ be an arbitrary vertex, and $\phi: E(G) \to [r]$ be an $r$-edge-colouring of $G$ that does not contain a $\set{c_1,c_2}$-switch for any colours~$c_1 \neq c_2$. Then the vertex set of~$G-v_0$ can be partitioned into (possibly empty) sets~$V_1,\dots,V_r$ such that $G[V_1], \dots, G[V_r]$ induce monochromatic cliques of the same colour and the bipartite graph $(V_i,V_j)$ between any pair is monochromatic.
\end{lemma}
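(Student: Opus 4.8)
We have an $r$-coloring $\phi$ of $K_n$ with no switch, a distinguished vertex $v_0$, and we must partition $V(G) \setminus \{v_0\}$ into sets $V_1, \dots, V_r$ such that each $G[V_i]$ is a monochromatic clique, all these cliques share the same color, and each bipartite graph $(V_i, V_j)$ is monochromatic.

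The plan is to let the distinguished vertex $v_0$ define the partition outright: for each colour $c \in [r]$, set $V_c := \{\, w \in V(G) - v_0 : \phi(v_0 w) = c \,\}$, so that $V_1, \dots, V_r$ partition $V(G) - v_0$ according to the colour of the edge joining each vertex to $v_0$. Everything then follows from a single local consequence of switch-freeness, which I will call the \emph{twin property}: if $\phi(v_0 x) = \phi(v_0 y) = c$ for distinct $x,y$, then $\phi(xz) = \phi(zy)$ for every $z \notin \{v_0, x, y\}$. To prove it I would consider the four-cycle $v_0 x z y v_0$, whose edge-colours form the multiset $\{c, \phi(xz), \phi(zy), c\}$. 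Since $c$ already occurs twice here, the four edges cannot all receive distinct colours, so switch-freeness forces every colour in this multiset to occur an even number of times; a short case analysis (according to whether $\phi(xz)$ or $\phi(zy)$ equals $c$) then yields $\phi(xz) = \phi(zy)$ in every admissible case. This twin property is the engine of the whole argument, and it is essentially the only place the switch-free hypothesis is used directly.

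With it in hand, the two monochromaticity statements come quickly. For the bipartite graphs, fix $c \neq c'$; applying the twin property to any $x,y \in V_c$ shows that all vertices of $V_c$ send a common colour to a fixed $z \in V_{c'}$, while applying it to any $z, z' \in V_{c'}$ shows that a fixed $x \in V_c$ sends a common colour to all of $V_{c'}$, so $(V_c, V_{c'})$ is monochromatic. For a single part, I would take any three vertices $x, y, w \in V_c$ and apply the twin property three times, using the third vertex as the ``$z$'' in each case; this gives $\phi(xy) = \phi(xw) = \phi(yw)$, so every triangle inside $V_c$ is monochromatic, and hence $G[V_c]$ is a monochromatic clique. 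Parts $V_c$ of size at most one carry no edges and are trivially fine.

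The remaining and least immediate point is that all the cliques use the \emph{same} colour $c^*$, and this is where I expect the main (modest) effort to lie. I would settle it with one more four-cycle: given parts $V_c, V_{c'}$ each of size at least two, pick $x, y \in V_c$ and $z, w \in V_{c'}$ and consider $x y z w x$. Writing $a$ and $b$ for the (now well-defined) clique colours of $V_c$ and $V_{c'}$, and $p$ for the monochromatic bipartite colour of $(V_c, V_{c'})$, its colour multiset is $\{a, p, b, p\}$. As $p$ occurs twice, switch-freeness once more rules out four distinct colours and forces even multiplicities, and checking the cases shows this is possible only when $a = b$. Hence any two parts of size at least two share a colour, which we take to be $c^*$; parts of size at most one impose no constraint and may be assigned colour $c^*$ as well. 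Combining the clique-monochromaticity, the common colour, and the bipartite-monochromaticity gives exactly the claimed partition. Finally, the small cases of $n$ in which $V(G) - v_0$ does not contain enough vertices to form the four-cycles above hold vacuously or by direct inspection.
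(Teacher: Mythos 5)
Your proposal is correct and follows essentially the same route as the paper: you define the parts by the colour of the edge to $v_0$, use the four-cycle $v_0xzyv_0$ (your ``twin property'') to make each $(V_i,V_j)$ and each $G[V_i]$ monochromatic, and use a four-cycle alternating between two parts to force a common clique colour. The only cosmetic difference is that the paper obtains the monochromaticity of the cliques directly as the $i=j$ case of the bipartite argument rather than via triangles.
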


\begin{proof}
    For every $i\in[r]$, let~$V_i$ be the set of vertices~$v$ for which~$\phi(v_0v)=i$.

    Suppose that, for some (not necessarily distinct) $i,j\in [r]$, there are two edges of different colours between~$V_i$ and~$V_j$. Then there exists a vertex $z\in V_i\cup V_j$ and edges $zx,zy\in (V_i,V_j)$ such that $\phi(zx)\neq \phi(zy)$ (see~\cref{fig:ContraBlowup}). But~$\phi(v_0x) = \phi(v_0y)$, so $v_0xzyv_0$ is a $\set{\phi(zx),\phi(zy)}$-switch, a contradiction.

    In particular, taking $i = j$ shows that the cliques $G[V_i]$ must be monochromatic. We now claim that they all have the same colour. Suppose $V_i$ and $V_j$ have different colours; in particular this means that $\size{V_i},\size{V_j}\geq 2$. Let $x,y \in V_i$ and $u, v \in V_j$ be distinct vertices, see \cref{fig:ContraMono}. We know that $\phi(xu) = \phi(yv)$, so if $\phi(xy)\neq \phi(uv)$, then $xuvyx$ is a~$\set{\phi(xy),\phi(uv)}$-switch, a contradiction to our assumption. 
\end{proof}

\def\sizeGraph{1.6}
\begin{figure}[ht]
    \centering

    \begin{subfigure}{0.40\textwidth}
        \centering
        \ProofBlowUp{\sizeGraph}
        \caption{Proof that each $(V_i,V_j)$ is monochromatic}
        \label{fig:ContraBlowup}
    \end{subfigure}%
    \hspace{0.3cm}%
    \begin{subfigure}{0.40\textwidth}
        \centering
        \ProofMonoClique{\sizeGraph}
        \caption{Proof that all cliques have the same colour}
        \label{fig:ContraMono}
    \end{subfigure}         

    \caption{Proof of~\cref{lem:noswitches}}
    \label{fig:ContradictionBlowupProof}

\end{figure}
     
\begin{remark}\label{rem:structural}
   We remark that, with a more involved proof, one can prove a stronger statement: up to a permutation of the colours, $G$ is the blow-up of a proper $(r-1)$-edge-colouring of $K_r$, where each vertex is blown up into a (possibly empty) monochromatic clique coloured $r$. However,~\cref{lem:noswitches} is sufficient for our purposes, and simpler to prove. We crucially use that the host graph is complete; for sparser host graphs, even this simpler characterisation of switch-free colourings does not hold, introducing additional difficulties in that setting.
\end{remark}

As mentioned in the proof outline, in the first step of our lower bound proof, we will build the ``switching structure'' that we refer to as a SPICy. Our SPICy will consist of a number of cycles with some switches embedded in them. The idea is that we will build a Hamilton cycle ``around'' this structure and then use the switches embedded in the SPICy to adjust the parities of some pairs of colours. Here we define and show the existence of a suitable base for this structure; we refer to it as a \emph{SPICy starter}. The SPICy starter is a set of vertex-disjoint cycles, to which we will iteratively attach switches to obtain our final SPICy.

\begin{definition}[SPICy starter]\label{def:spicystarter}
    For integers $r\geq 2$ and $t\geq 1$, an \emph{$(r,t)$-SPICy starter} in an $r$-coloured graph~$G$ is a collection of $t$ vertex-disjoint  cycles $C_1,\ldots,C_t$ such that each $C_i$ is an even-coloured cycle containing a monochromatic path of length at least $2r$. 
    The {\em size} of the SPICy starter is the sum of the cycles' lengths.
\end{definition}

Observe that an $(r,t)$-SPICy starter has size exceeding $2rt$.  In our proof, we will also need that the SPICy starter is not too large. The following lemma shows that this is indeed possible, provided~$n$ is large enough. 

\begin{lemma}\label{lem:spicystarter}
For all integers $r\geq 2$ and $n\geq 2r^2+6r^{3/2}$, let $G$ be an $r$-coloured complete graph on $n$ vertices. Then~$G$ contains an $(r,t)$-SPICy starter of size at least $2rt+2r$  and at most~$2rt+6r$ for some integer $t \le \ceil*{r^{1/2}}$. 
\end{lemma}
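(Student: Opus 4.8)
The plan is to exhibit, for $t:=\ceil{r^{1/2}}$, a collection of $t$ vertex-disjoint even-coloured cycles, each of length exactly $2r+2\ceil{r^{1/2}}$ and each containing a monochromatic path of length at least $2r$. Granting this, the total size is $t\parens*{2r+2\ceil{r^{1/2}}}=2rt+2\ceil{r^{1/2}}^2$, and since $r\le \ceil{r^{1/2}}^2\le (r^{1/2}+1)^2\le 3r$ (the last inequality holding for $r\ge 2$), the size lands exactly in the required window $[2rt+2r,\,2rt+6r]$. Thus the whole lemma reduces to building one such cycle repeatedly in the leftover graph, and the constant $6$ in the hypothesis $n\ge 2r^2+6r^{3/2}$ should be read as ``$2$ for the cycle sizes plus spare room for the density argument below''.

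To build a single cycle in the current complete graph $K_m$ on the remaining vertices, I would first pass to the densest colour class, which carries at least $\binom{m}{2}/r$ edges. By the \Erdos--Gallai theorem, a graph on $m$ vertices with this many edges contains a path with at least $\frac{m-1}{r}$ edges, and this path is monochromatic, say of colour $c$. The role of the hypothesis $n\ge 2r^2+6r^{3/2}$ is exactly that, after greedily deleting the $O(r^{3/2})$ vertices used by the cycles built so far, one still has $m$ large enough that $\frac{m-1}{r}$ exceeds the longest truncation I will need, namely $2r+2\ceil{r^{1/2}}-2$. I would then truncate the path to an even length $\ell=2r+2\ceil{r^{1/2}}-q$, where $q\in\set{2,4}$ is the length of the connector selected in the next step, leaving endpoints $a,b$ and keeping $\ell\ge 2r$.

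It remains to close the monochromatic $a$--$b$ path, of colour $c$ and even length $\ell$, into an even-coloured cycle of length exactly $2r+2\ceil{r^{1/2}}$ by attaching a short connector through hitherto-unused vertices (of which there are $\approx 2r^2$, far more than required). The clean case is a length-two connector $a\,w\,b$ with $\phi(aw)=\phi(bw)$: then colour $c$ gains either $0$ or $2$ extra edges while any new colour appears exactly twice, so every colour class of the resulting cycle is even, and the original path survives as a monochromatic sub-path of length $\ell\ge 2r$. \textbf{The hard part} is that such a $w$ need not exist: the colouring around $a$ and $b$ may be arranged so that $\phi(aw)\neq\phi(bw)$ for \emph{every} candidate $w$. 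I expect this to be the main obstacle, and I would resolve it with a length-four fallback connector $a\,w_1\,w_2\,w_3\,b$ whose four edges split into two monochromatic pairs; the abundance of free vertices against only $r$ colours should make such a connector available by pigeonhole, and since $q$ was chosen to compensate, it disturbs neither the parity bookkeeping nor the final cycle length.

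Finally, I would iterate this construction $t=\ceil{r^{1/2}}$ times, each time deleting the (at most $2r+2\ceil{r^{1/2}}$) vertices of the new cycle, and check that the leftover count stays above the threshold needed for the \Erdos--Gallai step at every stage. This is a one-line estimate: the total number of deleted vertices never exceeds $2rt+2\ceil{r^{1/2}}^2\le 2r^{3/2}+O(r)$, so $m$ remains at least $2r^2+\Omega(r^{3/2})$ throughout, which is precisely what $n\ge 2r^2+6r^{3/2}$ guarantees. The finitely many small values of $r$ for which this margin is too tight can be dispatched directly.
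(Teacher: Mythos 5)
Your reduction of the lemma to building $t=\ceil{r^{1/2}}$ disjoint even-coloured cycles of length exactly $2r+2\ceil{r^{1/2}}$ is arithmetically fine, and the \Erdos--Gallai step does produce the long monochromatic path you want. The genuine gap is exactly where you flagged it: closing the path into an even-coloured cycle. For the closed cycle to be even-coloured, \emph{every} colour must appear an even number of times on the connector $Q$ joining $a$ to $b$ (the path already contributes an even number of $c$-edges), and such a $Q$ need not exist for the pair $(a,b)$ you are handed. Concretely, if every edge from $a$ to the free set $W$ has colour $1$, every edge from $b$ to $W$ has colour $2$, and every edge inside $W$ has colour $3$, then any $a$--$b$ path with internal vertices in $W$ contains exactly one edge of colour $1$ and exactly one edge of colour $2$; so no connector of \emph{any} length is even-coloured, and in particular the length-four fallback fails. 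The ``abundance of free vertices against only $r$ colours'' does not rescue this, because the pigeonhole you need is for two edges of equal colour meeting at a common vertex in the right places, which the adversary can simply forbid. Your argument provides no mechanism for choosing the monochromatic path so that its endpoints avoid this configuration, so the construction does not go through as written.

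The paper sidesteps this issue entirely: instead of finding a monochromatic \emph{path} and trying to close it, it applies \cref{lem:longevencycles} (Li--Ning, based on Vo{\ss}--Zuluaga: average degree $d\ge 3$ forces an even cycle of length at least $d$) to the densest colour class, obtaining a monochromatic \emph{even cycle} of length at least $2r+2r^{1/2}$ directly; a monochromatic cycle of even length is trivially even-coloured and contains the required monochromatic path of length $2r$. The price paid there is the loss of control on cycle lengths from above, which forces an extra shortening step (replacing even-length subpaths by monochromatic cherries through a reserved vertex) to land in the window $[2rt+2r,\,2rt+6r]$ when the first cycle is too long. To salvage your approach you would need to replace \Erdos--Gallai by such an even-cycle lemma; closing a monochromatic path with a short even-coloured connector is not in general possible.
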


We shall in fact build our SPICy starter by finding a sequence of monochromatic cycles of even lengths. While the well-known theorem of Erd\H{o}s and Gallai~\cite{erdosgallai59} asserts that sufficiently dense graphs contain long cycles, the following extension due to Li and Ning~\cite{li2023eigenvalues}, based on a theorem of Vo{\ss} and Zuluaga~\cite{voss1977maximale}, ensures that they can be taken to be of even length.

\begin{lemma}[Li--Ning~\cite{li2023eigenvalues}] \label{lem:longevencycles}
    Let $G$ be a graph of average degree $d \ge 3$. Then $G$ contains an even cycle of length at least $d$.
\end{lemma}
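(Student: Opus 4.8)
The plan is to reduce the average-degree hypothesis to a minimum-degree condition on a highly connected subgraph, and then invoke a sharp even-length analogue of Dirac's circumference theorem. The key quantitative point is that $2$-connectivity roughly doubles the guaranteed cycle length (from $\delta$ to $2\delta$), which is exactly what compensates the factor-of-two loss incurred when passing from average degree $d$ to minimum degree $d/2$.

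First I would pass to a densest subgraph. Let $H\subseteq G$ be a subgraph maximising the average degree $2e(H)/|V(H)|$. Since $G$ itself is a candidate, $H$ has average degree at least $d$, and extremality forces $\delta(H)\ge d/2$: deleting a vertex of degree below half the average degree strictly increases the average degree, contradicting maximality. Writing $h=|V(H)|$, the bounds $e(H)\ge \tfrac{d}{2}h$ and $e(H)\le\binom{h}{2}$ together give $h\ge d+1$. Thus $H$ has minimum degree at least $d/2$ and at least $d+1$ vertices. A standard but slightly fiddly argument (restricting to a connected component and then to a leaf block, where every non-cut vertex keeps its degree) lets me arrange that the subgraph I work with is $2$-connected, at the cost of controlling at most one exceptional low-degree vertex; I expect this bookkeeping to be routine.

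With a $2$-connected subgraph of minimum degree $\delta\ge d/2$ on at least $d+1$ vertices in hand, Dirac's circumference theorem~\cite{dirac1952some} produces a cycle of length at least $\min\{2\delta, h\}\ge\min\{d,d+1\}=d$. If this cycle is even --- in particular whenever $H$ is bipartite, since then every cycle is even --- we are already done. The genuinely hard case is when $H$ is non-bipartite and every cycle of length at least $d$ happens to be odd.

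The main obstacle is therefore parity control \emph{without} any loss in length. The naive idea fails: adding a chord to a long odd cycle splits it into two cycles of opposite parity, so one is even, but that even cycle may have length only about half of the original, i.e.\ around $d/2$ --- precisely the factor we cannot afford to give back. Instead I would follow the strategy of Vo{\ss} and Zuluaga~\cite{voss1977maximale}: start from a longest cycle $C$ (of odd length at least $\min\{2\delta,h\}$) and use non-bipartiteness together with $2$-connectivity to find a \emph{short} rerouting --- an ear or odd detour replacing a bounded-length arc of $C$ by a path of the opposite parity --- which flips the parity of $C$ while changing its length only by a controlled amount, yielding an even cycle that is still of length at least $d$. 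Guaranteeing that this rerouting keeps the cycle long, rather than merely producing some even cycle, is the sharp even-circumference statement underlying the lemma, and is the step I expect to demand the most care; it is what forces one to analyse a longest cycle in combination with a shortest odd cycle rather than settling for an arbitrary chord.
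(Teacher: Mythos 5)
A preliminary remark on the comparison: the paper contains no proof of \cref{lem:longevencycles} --- it is quoted from Li--Ning~\cite{li2023eigenvalues}, who in turn derive it from the even-circumference theorem of Vo{\ss} and Zuluaga~\cite{voss1977maximale}. Your outline retraces essentially that derivation (densest subgraph, block reduction, a Dirac-type circumference bound, and a Vo{\ss}--Zuluaga-style parity repair), and deferring the hard parity step to~\cite{voss1977maximale} is consistent with what the cited source itself does, so the route is the right one.

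There is, however, a genuine gap in the step you wave off as ``routine bookkeeping''. After passing to the densest subgraph $H$ with $\delta(H)\ge d/2$ and $v(H)\ge d+1$, you restrict to a leaf block $B$, in which every vertex except the attaching cut vertex retains its degree. But the bound $v(H)\ge d+1$ does \emph{not} transfer to $B$: the minimum-degree condition alone only guarantees $\card{B}\ge d/2+1$, and nothing in $\delta(H)\ge d/2$ forbids $H$ from being a dense core with a pendant endblock isomorphic to $K_{d/2+1}$. In that situation the Dirac-type bound inside $B$ yields only $\min\set{2\cdot\tfrac d2,\card{B}}=\card{B}$, i.e.\ at best a Hamilton cycle of $B$ of length roughly $d/2$ --- exactly the factor-of-two loss your plan is designed to avoid. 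The repair requires the full strength of the extremal choice of $H$, not merely its minimum degree: maximality of $2e(H)/v(H)$ implies that every set $S\subseteq V(H)$ is incident to at least $\tfrac d2\card{S}$ edges (otherwise deleting all of $S$ would increase the average degree). Applying this with $S=V(B)\setminus\set{v}$ for an endblock $B$ with cut vertex $v$, and noting that the edges incident to $S$ are precisely $E(B)$, gives $e(B)\ge\tfrac d2\parens*{\card{B}-1}$, which together with $e(B)\le\binom{\card{B}}{2}$ forces $\card{B}\ge d$. With that in hand your reduction goes through, but note a second, related point of care: both the Dirac-type circumference bound and the Vo{\ss}--Zuluaga even-circumference theorem must then be invoked in a form tolerating one exceptional low-degree vertex (the cut vertex of $B$); the classical statements assume $\delta\ge k$ for \emph{all} vertices, so you need either the one-exception versions or a separate treatment of that vertex, and this should be stated rather than absorbed into the bookkeeping.
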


Armed with this lemma, we can construct our SPICy starters.

\begin{proof}[Proof of~\cref{lem:spicystarter}]

    We start by building monochromatic vertex-disjoint even cycles $C_1,C_2,\ldots$ iteratively. For technical reasons, we set aside an arbitrary vertex $x\in V(G)$. We then let~$U_1:=V(G)\setminus\set{x}$. For~$i\geq 1$, assume that we have already chosen vertex-disjoint cycles $C_1,\ldots,C_{i-1}$ and set
    \[U_i:=U_1\setminus\bigcup_{1\leq j<i}C_j.\]
    We further assume that $C_j$ was the longest available cycle in $G[U_j]$.
    To find our next cycle, let $H_i \subseteq G[U_i]$ be a densest colour class. The average degree of $H_i$ is then at least $\frac{1}{r}(\card{U_i} - 1)$. Hence, provided $\card{U_i} \ge 2r^2 + 2r^{3/2} + 1$,~\cref{lem:longevencycles} guarantees the existence of an even cycle of length at least $2r + 2r^{1/2}$ in $H_i$. Let $C_i$ be a longest monochromatic even-length cycle in $G[U_i]$, and notice that, by our choice of $C_{i-1}$, we have $v(C_i)\leq v(C_{i-1})$.
    If $\sum_{j\leq i}|C_j|\geq 2ri+2r$, then we stop and set $t:=i$; otherwise, we continue to the next iteration.
    
    Observe that each cycle has length at least $2r+2r^{1/2}$, and therefore we have $t\leq \ceil*{r^{1/2}}$. Furthermore, for every~$i\leq t$, we have $\sum_{j< i}|C_j|\leq 2r(t-1)+2r-1\leq 2r^{3/2}+2r$, and therefore $|U_i| \geq n-1 - 2r^{3/2}-2r \geq 2r^2 +2r^{3/2}+1$, ensuring that the process does not stop prematurely.

    Now, if \(\sum_{j\leq t}|C_j|\leq 2rt+6r\), we simply take $C_1,\dots, C_t$ as our SPICy starter, since our stopping condition ensures that \(\sum_{j\leq t}|C_j|\geq 2rt+2r\). So assume otherwise.
    We claim that this can only occur if $t=1$. Indeed, we know that 
    \[ 2r(t-1)+2r^{1/2}(t-1) \leq \sum_{j<t}|C_j|<2r(t-1)+2r,\]
    so the final cycle $C_t$ has length more than $2rt+6r-2r(t-1)-2r \geq 4r$. But $v(C_1)\geq v(C_t)\geq 4r = 2r+2r$, so the process terminates with $t=1$.
    Hence, we assume $C_1$ has length more than $8r$. We argue that $C_1$ can be suitably shortened to have length between $4r$ and $8r$. Suppose $C_1=u_0\dots u_{\ell}u_0$ for some $\ell\geq 8r$. Let $x$ be the vertex we set aside at the start. Consider the edges $xu_{4i}$ for $i\in \set{0,\dots, r}$. Two of these edges, say $xu_{4i}$ and $xu_{4i'}$ for some~$0\leq i<i'\leq r$, have the same colour, so we can shorten the cycle by replacing the path $u_{4i}u_{4i+1}\dots u_{4i'}$ by the monochromatic cherry $u_{4i}xu_{4i'}$. This shortens the cycle by at least two and at most $4r-2$ vertices, so its length is still at least $8r-(4r-2) =4r+2$, and it still contains the monochromatic path~$u_{4r}\ldots u_{\ell}$. We repeat this process as long as the cycle has length exceeding~$8r$, while ensuring that the cycle obtained at each step still contains the required monochromatic path of length $2r$. In preparation for each subsequent iteration, we pick one of the vertices just discarded from the cycle arbitrarily and let that be the new vertex $x$. As we always replace an even-coloured path in $C_1$ by a monochromatic cherry, we end up with an even-coloured cycle of length at most $8r$ containing a monochromatic path of length at least $2r$, as desired.
\end{proof}

\subsection{Proof of the lower bound}
\label{subsec:proof_lower_bound}

In this section, we prove the lower bound in 
\cref{thm:complete_graphs}. We prove the following slightly stronger result, which immediately implies the required bound when $n$ is even.

\begin{theorem}
    \label{thm:SqrtLowerBound}
    If $r\geq 2$ and $n \ge 2r^2+40r^{3/2}$, then every $r$-colouring of $K_n$ contains a Hamilton cycle with at most one odd colour class. In particular, for even $n$ tending to infinity, we have 
    \[ \ro{n}{C_n} \geq \parens*{\frac{\sqrt{2}}{2} + o(1)}\sqrt{n}. \]
\end{theorem}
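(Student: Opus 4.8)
The plan is to take an arbitrary $r$-colouring of $K_n$ with $r$ somewhat below $\sqrt{2n}$ and produce a Hamilton cycle with at most one odd colour class, following the switch-based strategy laid out in the proof outline. The overall architecture is: iteratively extract vertex-disjoint switches while merging the two colours each switch flips, until the remaining (merged) colouring is switch-free; apply \cref{lem:noswitches} to that switch-free colouring to get the monochromatic-clique blow-up structure; build an almost-spanning even-coloured cycle out of that structure; then absorb the previously extracted switches back in using the SPICy machinery and bridges; and finally unmerge the colours, using each switch's flipped matching to correct the parity of its two colours whenever they both come out odd. Since every switch merge reduces the number of distinct (merged) colours by one, after all merges are done the number of surviving colour classes can potentially be driven down to one — and any single leftover odd class is exactly the ``at most one odd colour class'' we are permitted.

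\textbf{First} I would set up the counting that governs the quantitative bound. The threshold $n \geq 2r^2 + 40 r^{3/2}$ should be read as saying $r \lesssim \sqrt{n/2}$, which is precisely where the constant $\frac{\sqrt2}{2}$ comes from: taking the contrapositive, if $r_{odd}(n,C_n) = r$ then $K_n$ admits an $r$-colouring with \emph{no} even-coloured Hamilton cycle, so the theorem forces $r$ to exceed the threshold, giving $r \geq (\tfrac{\sqrt2}{2}+o(1))\sqrt{n}$. So the engine is entirely the structural statement about $r$-colourings, and I would organise the argument so that the hypothesis $n \geq 2r^2 + 40r^{3/2}$ is exactly what is consumed by the vertex budget: \cref{lem:spicystarter} needs $n \geq 2r^2 + 6r^{3/2}$ to build a SPICy starter of size $O(r^{3/2})$, and the remaining $O(r^{3/2})$ slack in the $40r^{3/2}$ term is the reservoir for the set-aside switches, the cleaning-up vertices, and the absorbing cherries.

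\textbf{The key steps} in order: (i) greedily pull out vertex-disjoint switches and merge colour pairs; stop when the graph on the untouched vertices (with merged colours) is switch-free, noting that each extracted switch spends only four vertices and we extract at most $r-1$ of them before colours collapse, so the vertex cost is $O(r)$; (ii) invoke \cref{lem:noswitches} on the switch-free part to partition all but one vertex into monochromatic cliques $V_1,\dots,V_{r'}$ of a common colour with monochromatic bipartite graphs between them, and observe (as the outline claims) that such a blow-up admits an even-coloured spanning cycle — this is where a figure like \cref{fig:HamcycleSwitchFree} does the work, routing through each clique and crossing between cliques so that each crossing colour is used an even number of times; (iii) assemble the set-aside switches into a SPICy on top of a SPICy starter (from \cref{lem:spicystarter}), maintaining the invariant that the SPICy together with the base matchings has an even-coloured $2$-factor, verified by the promised linear-algebraic argument over $\mathbb{F}_2$; (iv) use pigeonhole to find monochromatic \emph{bridges} joining each SPICy cycle to the almost-spanning cycle, thereby splicing everything into one Hamilton cycle on all but the cleaning-up vertices; (v) absorb the leftover vertices via monochromatic cherries; (vi) unmerge in reverse order, flipping each switch between its base and flipped matching to kill any pair of simultaneously-odd colours, leaving at most one odd class.

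\textbf{The main obstacle} I anticipate is step (iii)–(iv): guaranteeing that every set-aside switch can actually be absorbed \emph{without} creating a brand-new odd colour, and simultaneously that the parity bookkeeping is consistent globally. The delicate point, flagged in the outline, is that a switch endpoint might be incident only to edges of a colour appearing nowhere else, so naively threading the switch into the cycle forces that colour to be used exactly once. The SPICy structure is designed precisely to quarantine this danger by pre-packaging the switches inside cycles with a guaranteed even-coloured $2$-factor, but making the bridges and the final parity correction interact correctly — ensuring that toggling base versus flipped matchings across all embedded switches can independently fix every merged pair, which is what the $\mathbb{F}_2$-linear-algebra invariant must encode — is where the real care is needed. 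The cleaning-up and cherry-absorption in steps (v) is comparatively routine once the vertex budget from the $40r^{3/2}$ slack is confirmed to suffice, so I would spend most of the effort nailing down the SPICy invariant and the bridge-finding pigeonhole.
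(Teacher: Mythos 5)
Your proposal follows the paper's strategy throughout (SPICy starter, switch extraction with colour merging, the structure lemma for switch-free colourings, bridges, cherry absorption, unmerging), but two of your claims are genuine gaps rather than omitted routine details. The first is the vertex accounting. You assert that the switch-extraction phase costs $O(r)$ vertices (``each extracted switch spends only four vertices and we extract at most $r-1$ of them'') and that the $O(r^{3/2})$ slack in the hypothesis is what pays for the set-aside switches. In the paper's argument the switch-embedding phase may consume $\Theta(r^2)$ vertices: in each round one must locate $s^{(i)}$ \emph{candidate} vertex-disjoint switches, where $s^{(i)}$ is the current number of merged colours, and up to $s^{(i)}$ of them are embedded while the colour count may drop by as little as one, giving a total of up to $\sum_j 4s^{(j)} = \Theta(r^2)$ vertices (the paper bounds $\card{V(C^*)} \le 2r^2 + 10r^{3/2} - 2s^2$). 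The $2r^2$ term in the hypothesis $n \ge 2r^2 + 40r^{3/2}$ is exactly the budget for this phase. If the switches really only cost $O(r)$ vertices, the same argument would run for $n = O(r^{3/2})$ and yield $\ro{n}{C_n} = \Omega(n^{2/3})$, contradicting the paper's own $O(\sqrt{n})$ upper bound, so your accounting cannot be correct.

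The second gap is that the one-switch-at-a-time greedy extraction in your step (i) does not preserve the invariant your step (iii) relies on. Replacing an attachment edge by the base matching of a single switch $S_j$ changes the colour parities of the $2$-factor by a vector $\vec{w}_j \in \mathbb{F}_2^{s^{(i)}}$ recording six edges, and there is no reason for $\vec{w}_j$ to vanish. The even-coloured $2$-factor is only maintained because one collects $s^{(i)}$ disjoint candidates at once, notes that all the $\vec{w}_j$ lie in the $(s^{(i)}-1)$-dimensional even-weight hyperplane, and hence finds a nonempty $J$ with $\sum_{j\in J}\vec{w}_j = \vec{0}$; only the switches in $J$ are embedded and only their colour pairs merged. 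This forces the batch structure and also dictates the correct stopping condition: the process halts not when the residual graph is switch-free, but when it no longer contains $s^{(i)}$ vertex-disjoint switches, at which point a maximal disjoint family (fewer than $r$ switches, hence $O(r)$ vertices) is set aside to be absorbed later via monochromatic cherries, and it is the graph minus the SPICy minus these set-aside vertices that is switch-free and amenable to \cref{lem:noswitches}. Your proposal names the linear-algebraic invariant but decouples it from the extraction in a way that breaks the argument.
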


\begin{proof}[Proof of~\cref{thm:SqrtLowerBound}] Let $n$ be as given, and fix an r-colouring $\varphi$ of $K_n$. Our proof strategy is as follows. We will first build a structure potentially containing some switches (our SPICy), then show that the remainder of the graph has a very particular structure (using~\cref{lem:noswitches}). We shall then build an even-coloured cycle covering most of the vertices in the latter part of the graph, which can ``absorb'' the remainder of the vertices, including the SPICy. Finally, we will adjust any incorrect colour  parities by ``flipping'' some switches. 

\paragraph{Building a SPICy.}
By \cref{lem:spicystarter}, as $n\geq 2r^2+6r^{3/2}$, there exists some $t \le \ceil*{r^{1/2}}$ and an $(r,t)$-SPICy starter $C_1,\dots, C_t$ of size between $2rt+2r$ and $2rt+6r$. For each $\ell \in [t]$, let~$P_{\ell}$ denote a monochromatic path of length~$2r$ on $C_{\ell}$. To initialize the SPICy-building process, we let $C^{(0)}$ be our SPICy starter and choose $e_1, e_2, \hdots, e_r$ to be distinct edges on the SPICy starter that are not on the monochromatic paths $P_{\ell}$; these edges exist since $\sum_{\ell \leq t}\size{C_{\ell} \setminus P_{\ell}} \geq 2rt+2r - 2rt = 2r$. These will be our initial \emph{attachment edges}. Throughout the SPICy-building process, we maintain a $2$-factor $F^{(i)}$ of the SPICy, along with the invariant that $F^{(i)}$ is even-coloured. We initialize~$F^{(0)}:= C^{(0)}$. This 2-factor provides the initial ``route'' for traversing the components of the SPICy when we join them into a Hamilton cycle later; we might later alter some parts when we flip some switches. We also maintain a register $\cL$ of which pairs of colours get merged, so we can later unmerge them in the correct order, and keep track of the number $s^{(i)}$ of colours remaining, setting $s^{(0)} := r$. 

Now assume, for some $i \ge 0$, we have completed $i$ rounds of attaching switches, resulting in a SPICy $C^{(i)}$, together with a spanning even-coloured $2$-factor $F^{(i)}$ passing through the base matchings of all the switches in $C^{(i)}$ and all the attachment edges $e_1, e_2, \hdots, e_r$. Let~$s^{(i)}$ be the number of colours in the current colouring.

\noindent\underline{Case 1}: Suppose we find $s^{(i)}$ vertex-disjoint switches $S_1, \hdots, S_{s^{(i)}}$ in $G - V(C^{(i)})$. For each~$1 \le j \le s^{(i)}$, write $S_j = u_jv_jx_jy_ju_j$ and~$e_j = a_jb_j$, and let $\vec{w}_j \in \mathbb{F}_2^{s^{(i)}}$ be the vector recording the number of times each colour appears on the following six edges: $e_j$, $u_ja_j$, $x_jb_j$, $u_jv_j$, $v_jy_j$, and $x_jy_j$. Note that each vector~$\vec{w}_j$ records the colours of six edges, and hence these vectors belong to the hyperplane~$\vec{1}\cdot\vec{x} = 0$  in~$\mathbb{F}_2^{s^{(i)}}$. Since we have $s^{(i)}$ vectors, there is a nontrivial relation $\sum_{j\in J}\vec{w}_j = \vec{0}$ for some $J\subseteq [s^{(i)}]$.

Now that we have decided which switches to embed, we must update the SPICy and the corresponding $2$-factor. For each $j \in J$, consider the attachment edge $e_j = a_jb_j$, and let $C$ be the cycle in $F^{(i)}$ to which the edge belongs. We remove the edge $e_j$, and attach the switch $S_j$ in its place by adding the edges $a_j u_j$ and $x_j b_j$. We also add the edge~$v_j y_j$ to the SPICy, setting it as the new attachment edge $e_j$. For the spanning $2$-factor, after adding these vertices, we replace the edge $a_jb_j$ in the cycle with the path through the base matching of $S_j$; that is, $a_j u_j v_j y_j x_j b_j$. After performing these updates for each $j \in J$, we have our new SPICy $C^{(i+1)}$, together with its spanning $2$-factor~$F^{(i+1)}$.

Note that the vector $\vec{w}_j$ records how the colour parities in the $2$-factor change when we replace the edge $e_j$ with the path through the base matching of $S_j$. Since $\sum_{j\in J} \vec{w}_j = \vec{0}$, and $F^{(i)}$ was even-coloured, it follows that $F^{(i+1)}$ is also even-coloured. 

Once our selected switches have been embedded, we merge the corresponding pairs of colours, as we shall later be able to use these switches to correct their parities. More precisely, for $j \in J$, if $S_j$ is a $\set{c,c'}$-switch with $c<c'$, we merge the  colours $c$ and $c'$; that is, we change the colour of all~$c'$-edges to $c$, and add the pair $(c,c')$ to our register $\cL$. Note that at least one pair of colours is merged, and so $s^{(i+1)} < s^{(i)}$. We can now proceed to the next iteration of our SPICy-building process.

The evolution of the SPICy, represented for simplicity as a single cycle, is illustrated in \cref{fig:SwitchesInception}. Note that the $2$-factor $F^{(i)}$ is depicted with solid lines, while the flipped matchings of the switches are shown as dotted lines.

\def\sizeGraph{.8} 
\def\SubSize{0.23\textwidth}% {0.40\textwidth}
\def\Space{0.1cm}%{.25cm}

\begin{figure}[ht]
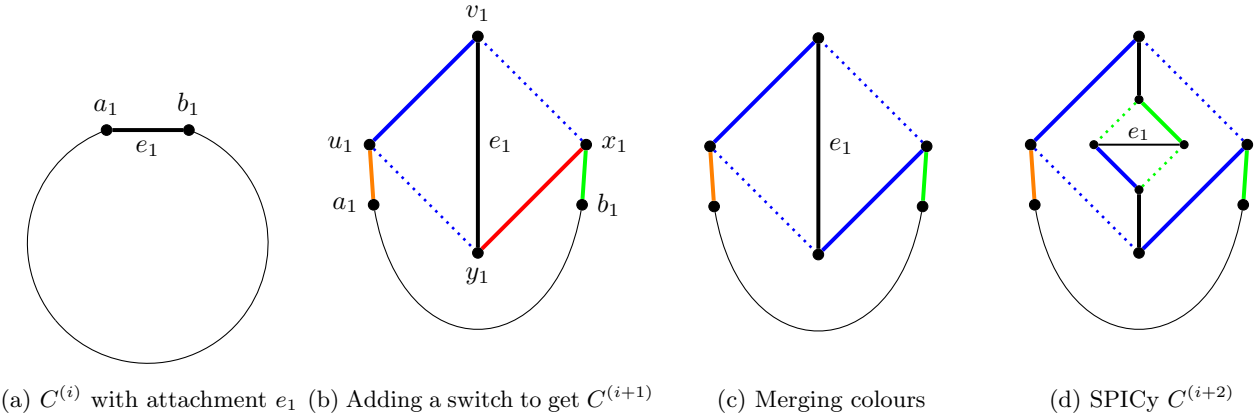

    \centering

    \begin{subfigure}{\SubSize}
        \centering
        \inception{\sizeGraph}{0}
        \caption{$C^{(i)}$ with attachment $e_1$}
        \label{fig:InitSpicy}
    \end{subfigure}%
    \hspace{\Space}%
    \begin{subfigure}{0.26\textwidth}
        \centering
        \inception{\sizeGraph}{1}
        \caption{Adding a switch to get~$C^{(i+1)}$}
        \label{fig:OneSwitch}
    \end{subfigure}
    \hspace{\Space}%
    \begin{subfigure}{\SubSize}
        \centering
        \inception{\sizeGraph}{2}
        \caption{Merging colours}
        \label{fig:Merging}
    \end{subfigure}  
    \hspace{\Space}%
    \begin{subfigure}{\SubSize}
        \centering
        \inception{\sizeGraph}{3}
        \caption{SPICy $C^{(i+2)}$}
        \label{fig:InceptedSwitch}
    \end{subfigure}      
    \caption{Evolution of a SPICy}
    \label{fig:SwitchesInception}

\end{figure}

If we can complete this process until there is only one colour remaining (at most $r-1$ iterations needed), then we have found a full set of switches. We then construct a Hamilton cycle incorporating these switches by taking an arbitrary path through all the vertices outside the SPICy, cutting each cycle of the 2-factor at an arbitrary point of the monochromatic path $P_i$, and joining these together, again in an arbitrary fashion. We can then skip to the final step, Unmerging Colours.

\noindent\underline{Case 2}: If we do not have $s^{(i)}$ vertex-disjoint switches, let $S_1, S_2, \hdots, S_q$ be a maximal set of vertex-disjoint switches, for some $q < s^{(i)}$. We then let $R = \bigcup_{j\in[q]}S_j$ be the set of vertices of these switches; we will refer to $R$ as the set of GROVes (short for \emph{Got Rid Of Vertices}). It follows that $\size{R} \leq 4q < 4r$. We then terminate the process and proceed to the next phase, setting $C^* = C^{(i)}$, $F^* = F^{(i)}$, and $s = s^{(i)}$.  Note that we added at most~$s^{(j)}$ switches to the SPICy $C^{(j)}$ to form $C^{(j+1)}$, and so the number of vertices in the final SPICy $C^*$ is at most 
\[ 2rt+6r + \sum_{j=0}^{i-1} 4s^{(j)} \leq 2r\ceil*{r^{1/2}} + 6r + \sum_{\ell = s+1}^r 4 \ell \leq 2r^{3/2} + 8r + 4\binom{r}{2} - 4\binom{s}{2} \leq 2r^2 + 10r^{3/2} - 2s^2.\]

\paragraph{Finding structure in the leftovers.}
The $s$-coloured graph $G-(V(C^{*})\cup R)$ contains no switches.  By~\cref{lem:noswitches}, the vertex set of this graph, save for an arbitrary vertex $v_0$, can be partitioned into sets $V_1,\dots, V_s$, such that each~$G[V_i]$ is monochromatic of the same colour, and each bipartite graph $(V_i,V_j)$ is monochromatic. 
Without loss of generality, assume the colour $1$ is used on all edges within the parts. Additionally, add $v_0$ to the set $R$ of GROVes, together with any part $V_i$ consisting of a single vertex. At this point, we have $\size{R} \leq  5r$. Without loss of generality, for some $s' \le s$, assume $V_1,\dots, V_{s'}$ are the parts containing at least two vertices. Notice that $\bigcup_{i\in [s']} V_i$ has size~$n-\size{V(C^*)} - \size{R} \geq 2s^2 + 25r^{3/2}$.

\paragraph{Attaching the SPICy.} The next step is to identify how we shall attach the SPICy to this structured part of the graph. More precisely, we build a matching $M$ inside $\bigcup_{i\in [s']} V_i$ that we call a \emph{MaMa} (short for \emph{Mandatory Matching}); the edges of $M$ will mark the places where we will attach the components of the SPICy. We also build a collection of pairs of edges $B$, the elements of which we will call  \emph{bridges}. Initially, we set $M$ and $B$ to be empty, and let $U = V(M) = \emptyset$ be the set of vertices used in the MaMa. Recall that the number of components in the SPICy is~$t\leq \ceil*{r^{1/2}}$.

Now, for each $\ell \in[t]$, consider the monochromatic path $P_{\ell} = z_0 z_1 \hdots z_{2r}$ of length $2r$ from the cycle $C_{\ell}$ in the SPICy starter. Provided  $\sum_{i\in [s']} \size{V_i\setminus U} \geq s^2+1$, there must be some part $V_k\setminus U$ of size at least $s+1$. Take a matching between some $s+1$ vertices of $V_k\setminus U$ and $z_0,z_2,\dots,z_{2s}$. By the pigeonhole principle, two of these $s+1$ matching edges, say $z_{2j}a$ and $z_{2j'}b$ for some $j<j'$ and $a, b \in V_k \setminus U$, have the same (possibly merged) colour. We then add the edge $ab$ to the MaMa, the pair $(z_{2j}a, z_{2j'}b)$ to our set of bridges, and the vertices $z_{2j+1}, z_{2j+2}, \dots, z_{2j'-2}, z_{2j'-1}$ to the set $R$ of GROVes. We further mark $a$ and $b$ as ``used'' by adding them to $U$. 

At any point in this process, the number of used vertices in $U$ is at most $2t$, so $\sum_{i\in [s']} \size{V_i\setminus U} \geq \size*{ \bigcup_{i\in [s']} V_i}  - 2t \geq s^2+1$ as needed. In each iteration, we add fewer than $2r$ vertices to the set of GROVes, so at the end we have $\size{R}\leq 2rt+ 5r \leq 2r^{3/2} + 7r \le 9r^{3/2}$. 

\paragraph{Absorbing the GROVes.}
Once we have completed the process for all the cycles in the SPICy, we wish to similarly prepare to attach the GROVes. For each GROVe $x$ in turn, suppose we still have at least $s^2 + 1$ vertices in~$\bigcup_{i\in [s']} \size{V_i\setminus U}$. We can again find a part $V_k\setminus U$ with at least $s+1$ unused vertices. Take a star from $x$ to these vertices; by pigeonhole, there are $a, b \in V_k \setminus U$ such that the edges $xa$ and $xb$ have the same colour. As before, we add the edge $ab$ to our MaMa, the pair $(xa,xb)$ to our bridges, and the vertices $a$ and $b$ to $U$. 

At each point of the process, $\size{U} \leq 2\size{R} + 2t$, so $\sum_{i\in [s']} \size{V_i\setminus U} \geq \size*{\bigcup_{i \in [s']} V_i} - 2\size{R} - 2t \geq s^2+1$, ensuring we have sufficient space to absorb all the GROVes.

\paragraph{Building the Hamilton cycle.} We now focus on the graph $\hat{G} = G[\bigcup_{i \in [s']} V_i]$. Recall that each part $V_i$ induces a monochromatic clique of colour $1$, and each bipartite graph $(V_i,V_j)$ is monochromatic. In addition, we have a MaMa~$M$ whose edges all lie within the parts. Our goal is to find a cycle passing through every vertex of $\hat{G}$ and every edge of $M$, and then use $M$ to attach the SPICy $C^{*}$ and the GROVes in $R$.

For each part $V_i$ for $i\in[s']$, we select a vertex $w_i\notin V(M)$, noting that this is possible since we only added a MaMa edge in $V_i$ if there were at least $s+1 \geq 3$ unused vertices in $V_i$. We then cover the remaining vertices with a path $Q_i$ that uses all MaMa edges in $V_i$, and label its endpoints $u_i$ and $v_i$. Consider the cycle $C' := u_1 Q_1 v_1 u_2 Q_2 v_2 \dots u_{s'} Q_{s'} v_{s'} w_1 w_2 \dots w_{s'}u_1$, which spans all vertices in $\hat{G}$; see~\cref{fig:ham-cyc} for an illustration. Note that in $C'$ every colour between parts appears an even number of times; hence, the only possible odd colour is the internal colour $1$ that is used on the edges within the paths $Q_i$.

\begin{figure}[ht]
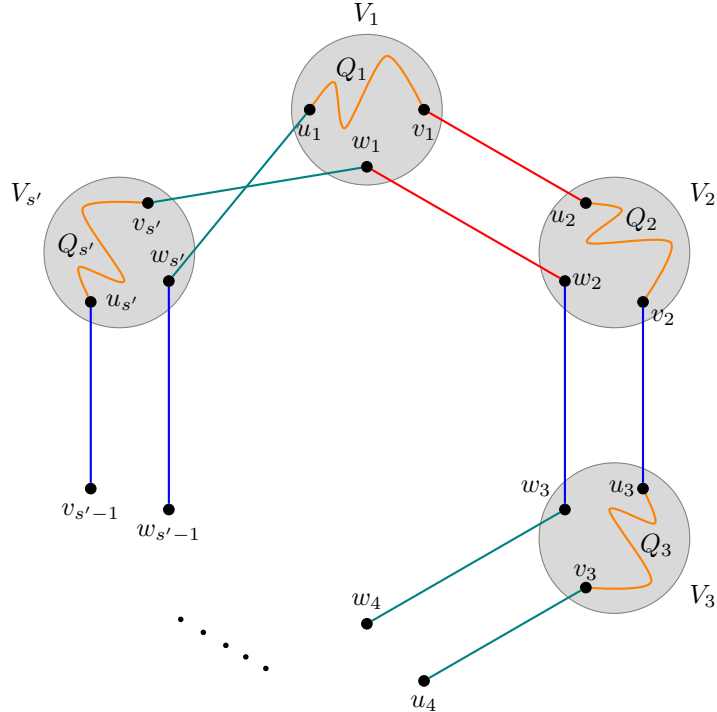

    \centering
    \HamcycleSwitchFree{0.95}
    \caption{The spanning cycle $C'$ in the switch-free portion $\hat{G}$ of $G$}
    \label{fig:ham-cyc}
\end{figure}

Now, for every edge $xy$ in the MaMa $M \subseteq C'$, we replace the edge $xy$ with the unique bridge in $B$ of the form~$(xw,yz)$ together with the structure attached to it, either a component of the SPICy $C^{*}$ or a GROVe from $R$. For each component of the SPICy, we traverse its  (non-GROVe) vertices as directed by its spanning cycle from $F^{*}$. Note that we only remove edges of colour $1$, but add an even number of edges in each colour --- indeed, the bridges came in monochromatic pairs, and $F^*$, from which we have only removed monochromatic paths of even length, was originally even-coloured. Thus, at the end, we obtain a Hamilton cycle of $K_n$, together with some embedded switches, such that all $s$ (remaining, possibly merged) colours except possibly $1$ appear an even number of times.

\paragraph{Unmerging colours.} We now process the merged colours in the reverse order in which they were merged, that is, going through the register $\cal{L}$ backwards, and using the corresponding switch to fix the parities as needed. Recall that we only merged two colours $c_1,c_2$ during the SPICy-building process when we found a $\set{c_1,c_2}$-switch and included it in the SPICy.

If a merged colour was the unique colour of odd cardinality, when we unmerge it, exactly one of the two resulting colour classes will be odd, so we do not need to do anything. If a merged colour was of even cardinality, when we unmerge it, either both colours will be even, or both colours will be odd. In the former case, we do not need to do anything, and in the latter case, we ``flip'' the switch for these two colours by traversing its flipped matching instead of its base matching, thus making both of these colours even. More precisely, suppose $S=uvxyu$ is a~$\set{c_1,c_2}$-switch embedded  in the Hamilton cycle through its base matching; let the Hamilton cycle at this step be given by $H = uvPyxQu$ for some paths $P,Q$, and assume each of $c_1$ and $c_2$ appears an odd number of times on $H$ after they are unmerged. We modify $H$ by taking the flipped matching of $S$ instead and reversing the direction of $P$, obtaining the new cycle $uyPvxQu$. Since $S$ is a $\set{c_1,c_2}$-switch, it follows that in this new cycle each of $c_1$ and $c_2$ appears an even number of times and the parities of all other colours remain unchanged. Thus, we preserve that at most one colour class is of odd cardinality.

After we have unmerged all colours in this way, we then have a Hamilton cycle in the original colouring  such that at most one colour is used an odd number of times, as required.
\end{proof}

\section{Sparser graphs}\label{sec:SparserGraphs}

In this section we investigate our problem in the setting of sparser Dirac graphs. Recall that we  defined $\rod{n}{\delta}{C_n} = \min \{ \ro{G}{C_n} : v(G) = n, \delta(G) \ge \delta \}$. 

\subsection{Upper bounds}
In this section, we prove the two upper bounds of~\cref{prop:UBSparse}. We start with the first bound, which is better when $\delta=\frac n2+O(\sqrt{n})$.

\begin{proposition}\label{prop:UB-2k}
    For every even integer $n\geq 4$  and every integer $1\leq k\leq\frac{n}{2}$, we have
    \[ \rod{n}{\frac n2 +k-1}{C_n}\leq 2k.\]
\end{proposition}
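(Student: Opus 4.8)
The plan is to exhibit a single graph $G$ on $n$ vertices with $\delta(G)=\tfrac n2+k-1$, together with a $2k$-colouring in which every Hamilton cycle has an odd colour class; this yields $\rod{n}{\frac n2+k-1}{C_n}\le \ro{G}{C_n}\le 2k$. For the graph I take $G=K_{2k}\vee(K_a\cup K_a)$ with $a=\tfrac n2-k$: a clique $S$ of size $2k$ joined completely to two disjoint cliques $A,B$ of size $a$ each, with \emph{no} edges between $A$ and $B$. A vertex of $A\cup B$ then has degree $(a-1)+2k=\tfrac n2+k-1$ and a vertex of $S$ has degree $n-1$, so $\delta(G)=\tfrac n2+k-1$ exactly. (When $k=\tfrac n2$ we have $a=0$ and $G=K_n$, and the bound $2k=n$ is trivial, so I may assume $1\le k\le\tfrac n2-1$, i.e.\ $A,B\neq\emptyset$.) Note that the minimum-degree requirement forces $A$ and $B$ to be cliques once $|S|=2k$, so this dense structure is essentially unavoidable.

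Colour the edges with palette $[2k]$ as follows: an edge $s_ia$ with $s_i\in S$, $a\in A$ and $i\le 2k-1$ receives colour $i$; every other edge receives colour $2k$. Here the catch-all colour $2k$ absorbs all $S$--$A$ edges at $s_{2k}$, all $S$--$B$ edges, and all edges inside $A$, inside $B$, and inside $S$. The heart of the argument is to understand how an arbitrary Hamilton cycle $C$ meets $S$. Since there are no $A$--$B$ edges, $S$ separates $A$ from $B$, so the maximal runs of $A$-vertices (\emph{$A$-arcs}) and of $B$-vertices (\emph{$B$-arcs}) on $C$ are each flanked by vertices of $S$. Contracting every $A$-arc and every $B$-arc to a single edge turns $C$ into a single cycle $Z$ on the $2k$ vertices of $S$, each of whose $2k$ edges is labelled $A$, $B$, or $SS$ (an original $S$--$S$ edge); write $p_A,p_B,e_S$ for the respective numbers of such edges, so $p_A+p_B+e_S=2k$. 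For $s_i\in S$ let $\alpha_i$ (resp.\ $\beta_i$) be the number of edges of $C$ from $s_i$ into $A$ (resp.\ $B$); then $\alpha_i$ equals the number of $A$-labelled edges of $Z$ at $s_i$, and $\sum_i\alpha_i=2p_A$, $\sum_i\beta_i=2p_B$.

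The key claim is that the vector $(\alpha_1\bmod 2,\dots,\alpha_{2k}\bmod 2)$ is nonzero for every Hamilton cycle. Indeed, the $A$-labelled edges form a subset of the edge set of the single cycle $Z$, and a subset of a cycle's edges in which every vertex has even degree must be empty or all of $E(Z)$; the former would force $C$ to avoid $A$ and the latter to avoid $B$, both impossible as $A,B\neq\emptyset$. Hence some $\alpha_i$ is odd. To conclude, I compute the parity of each colour class on $C$: for $i\le 2k-1$, colour $i$ appears exactly $\alpha_i$ times; and a short count using the three identities above (the total length is $n$, the internal $A$- and $B$-edges number $(a-p_A)+(a-p_B)$, etc.) shows that colour $2k$ appears $n-2p_A+\alpha_{2k}$ times, hence with parity $\alpha_{2k}\bmod 2$. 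Therefore the parity vector of the $2k$ colour classes is exactly $(\alpha_i\bmod2)_{i\in[2k]}\neq 0$, so some colour class is odd.

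The main obstacle is precisely this final parity accounting: the dense cliques $A,B$ create many internal and $S$--$S$ edges that cannot be given their own colours within a budget of only $2k$, and a naive "per-vertex detector'' uses up all $2k$ colours on $S$--$A$ edges alone. The trick that resolves this is to dump every such troublesome edge (together with the $S$--$A$ edges at the single vertex $s_{2k}$) into one catch-all colour and to check, via the identities $\sum_i\alpha_i=2p_A$, $\sum_i\beta_i=2p_B$ and $p_A+p_B+e_S=2k$, that this class nonetheless carries exactly the missing coordinate $\alpha_{2k}\bmod2$, so that the full parity vector still reconstructs the boundary vector $(\alpha_i\bmod2)_i$. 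Proving the nonvanishing of that boundary vector through the contraction-to-$Z$ argument is the conceptual core; the degree and cardinality verifications are then routine.
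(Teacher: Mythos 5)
Your proof is correct, and your construction is exactly the paper's: the same graph (two cliques of order $\tfrac n2-k$ joined completely to a set of $2k$ universal vertices, with no edges between the cliques) and, up to relabelling, the same colouring (a rainbow star of colours from the universal set into one clique, with one catch-all colour absorbing everything else). The only genuine difference is the verification that every Hamilton cycle is odd-coloured. The paper walks along the cycle starting in $A$ and observes that the edge by which the cycle first enters $B$ (or, if that edge happens to carry the catch-all colour, the edge by which it last leaves $B$) is the \emph{unique} edge of its colour on the cycle --- so it in fact shows the stronger property that every Hamilton cycle is unique-chromatic in the sense of the concluding remarks. You instead contract the $A$- and $B$-arcs to a cycle $Z$ on the separator, identify the parity vector of all $2k$ colour classes with the boundary vector $(\alpha_i \bmod 2)_i$, and invoke the fact that the only even subgraphs of a cycle are the empty set and the full edge set. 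Both verifications are sound (and both implicitly require $A,B\neq\emptyset$, i.e.\ $k<\tfrac n2$, with the extremal case handled trivially as you note); the paper's is shorter and yields the stronger unique-chromatic conclusion, while yours determines exactly which colour classes are odd and is more robust, since it does not rely on locating a single ``first crossing.''
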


\begin{proof}
    We build an $n$-vertex graph $G$ with minimum degree $\frac n2+k-1$, together with a $2k$-edge-colouring of~$G$ where every Hamilton cycle has an odd colour class. The vertex set $V(G)$ consists of disjoint sets $A,B,C$, where $A$ and $B$ are cliques of order $\frac n2 - k$, and $C = \{c_1, c_2, \hdots, c_{2k} \}$ is a set of $2k$ vertices that are adjacent to all vertices in $G$. We clearly have $\delta(G) = \tfrac n2 + k - 1$. For the edge colouring, colour all edges from $c_i$ to $B$ with the colour $i$, and colour all other edges with the colour $1$.

    Fix a Hamilton cycle and an arbitrary vertex $a_0 \in A$, and start walking along the cycle from $a_0$. Consider the first vertex $b_0$ we visit in $B$; we must get there from some vertex $c_i \in C$. The previous edge must then have come from a vertex in $A \cup C$, and so $b_0c_i$ is the only edge of colour $i$ in the cycle, unless $i=1$.

    In that case, let $b_1$ be the last vertex we visit in $B$. We must next travel to a vertex $c_j \in C$ with $j \neq 1$. The following edge must go to a vertex in $A \cup C$, so $b_1c_j$ will be the only edge of colour $j$ in the cycle.
\end{proof}

To prove the second bound, we modify the colouring from the proof of~\cref{prop:UpperboundComplete}, exploiting the sparser nature of Dirac graphs to reduce the number of colours used.
\begin{proposition}\label{prop:UB-SparseCayley}
    For every even integer $n\geq 4$ and every real $c\in\big[\frac 12,1\big]$, we have $$r_{odd}(n, cn - 1;C_{n}) \leq \frac{3c\sqrt{2}}{2} \sqrt{n} + 2.$$
\end{proposition}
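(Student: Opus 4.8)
The plan is to sparsify the finite-field colouring from the proof of \cref{prop:UpperboundComplete}, shaving both of its colour families down by a factor of roughly $c$, with each reduction both forced and enabled by the relaxed minimum-degree requirement. Recall that the complete-graph construction places the vertices on $\mathbb{F}_2^t \times [m]$, colours every ``star'' edge from the special vertex $(\vec 0, m)$ to $(\vec v, x)$ with the position colour $x \in [m]$, and colours every other edge between $(\vec v_1, x_1)$ and $(\vec v_2, x_2)$ with the difference $\vec v_1 + \vec v_2 \in \mathbb{F}_2^t$. I would keep exactly this colour scheme but delete edges in two ways: first, fix a connection set $S \subseteq \mathbb{F}_2^t \setminus \{\vec 0\}$ (spanning $\mathbb{F}_2^t$, so the graph stays connected) and keep a non-star edge only when its difference lies in $S \cup \{\vec 0\}$; second, restrict the special vertex so that it is joined only to the positions in some subset $X \subseteq [m]$. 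Writing $|S| = s$ and $|X| = p$, the palette shrinks to about $s + p$ colours, the generic degree becomes about $(s+1)m$, and the degree of the special vertex becomes about $p\,2^t$.

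Next I would choose the parameters. With $n \approx m 2^t$, the constraint $\delta(G) \ge cn - 1 = c m 2^t - 1$ splits into two requirements: the generic vertices force $s + 1 \gtrsim c 2^t$, while the special vertex forces $p \gtrsim c m$. These are precisely the two reductions that each remove a factor of $c$ from a colour family, so the total number of colours is about $s + p \approx c(2^t + m)$. I would then optimise over $t$ and $m$ exactly as in the proof of \cref{prop:UpperboundComplete}, taking $t$ closest to $\tfrac12 \log_2 n$ and $m = \lceil n/2^t\rceil$ so that $2^t + m \le \frac{3\sqrt2}{2}\sqrt n$, which delivers the claimed bound $\frac{3c\sqrt2}{2}\sqrt n + 2$ once the additive rounding slack (and the minor colour-$1$/$\vec 0$ identification) is absorbed into the ``$+2$''.

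It then remains to re-run the parity argument of \cref{prop:UpperboundComplete} and verify that the sparsification does not break it. Since $c \ge \tfrac12$, Dirac's theorem guarantees that $G$ is Hamiltonian, so we genuinely must show that every Hamilton cycle has an odd colour class. Given a putative even-coloured Hamilton cycle $C$, its two edges at the special vertex go to positions in $X$: if those positions differ, that star colour appears exactly once (star colours occur only at the special vertex and are distinct across $X$), giving an odd class; hence the two neighbours share a position and differ in their $\mathbb{F}_2^t$-coordinates $\vec v_1 \ne \vec v_2$. Deleting the special vertex leaves an even-coloured path $P$ all of whose edges carry $\mathbb{F}_2^t$-difference colours, and the telescoping identity gives $\sum_{e \in P}(\text{difference}) = \vec v_1 + \vec v_2$; evenness forces this sum to be $\vec 0$, contradicting $\vec v_1 \ne \vec v_2$. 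Crucially, this telescoping step is an identity valid for \emph{any} path, so discarding edges whose difference lies outside $S \cup \{\vec 0\}$ costs nothing at this stage.

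The main obstacle I anticipate is the bookkeeping that ties the two connection-set sizes to the single bound $cn-1$ simultaneously and uniformly over $c \in [\tfrac12, 1]$, especially when $n$ is not of the convenient form $m 2^t$: passing to an $n$-vertex induced subgraph can drop degrees by up to $m2^t - n < 2^t \approx \sqrt n$, which must be compensated by slightly enlarging $S$ and $X$ without overshooting the target colour count. Keeping all such $O(\sqrt n)$-level corrections inside the additive constant, together with cleanly handling the colour identification and the edge cases where the special vertex's own position interacts with $S \cup \{\vec 0\}$, is where the care lies; the combinatorial heart of the argument is inherited essentially unchanged from \cref{prop:UpperboundComplete}.
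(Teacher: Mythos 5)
Your proposal matches the paper's proof essentially verbatim: the paper likewise keeps only the non-star edges whose $\mathbb{F}_2^t$-difference lies in a set $S$ of $\lceil c2^t\rceil$ vectors containing $\vec 0$, joins the special vertex $(\vec 0,m)$ only to the positions $x\le\lceil cm\rceil$, checks $\delta(G)\ge cm2^t-1$, and reruns the telescoping parity argument unchanged before optimising $t$ and $m$ as in \cref{prop:UpperboundComplete}. The induced-subgraph degree-drop issue you flag for general $n$ is genuine --- the paper elides it with the same one-line embedding remark --- so noting it and proposing to absorb the correction by slightly enlarging $S$ and $X$ is, if anything, a point in your favour.
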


\begin{proof}
Let $m, t$ be positive integers and assume first that we have $n = m2^t$. We define a graph $G$ as the following subgraph of our construction in~\cref{prop:UpperboundComplete}. Recall that the vertex set is taken to be $\mathbb{F}_2^t \times [m]$, with colours in~$(\mathbb{F}_2^t \cup [m])\setminus\{1\}$. The vertex $(\vec{0}, m)$ is incident in $G$ to vertices of the form $(\vec{v},x)$ if and only if $x\leq \ceil{cm}$. We then fix a set~$S$ of~$\ceil{c 2^t}$ vectors in $\mathbb{F}_2^t$ with $\vec{0}\in S$. 
For all pairs $(\vec{u},x)$ and $(\vec{v},y)$ not involving $(\vec{0},m)$, we add an edge if and only if~$\vec{u} + \vec{v} \in S$. Observe that \[\delta(G)\geq \min\big\{\ceil{cm}\cdot 2^t,m\cdot\ceil*{c 2^t}\big\} - 1 \geq cm2^t - 1.\]

The colouring of $G$ is induced by our previous construction, that is, every edge between  vertices~$(\vec{0}, m)$ and~$(\vec{v}, x)$ is coloured~$x\in\ceil{cm}$ if $x>1$ and coloured $\vec{0}$ otherwise, while every remaining edge, say between vertices~$(\vec{v}_1, x_1)$ and~$(\vec{v}_2, x_2)$, is coloured~$\vec{v}_1+\vec{v}_2\in S$. It follows from the proof of~\cref{prop:UpperboundComplete} that $G$ contains no even-coloured Hamilton cycle, and hence
\[r_{odd}(m2^t, c m2^t - 1; C_{m2^t}) \leq \ceil{c 2^t} + \ceil{c m} -1\leq c(m+2^t)+1.\]

For general $n$, let $t \in \set{ \floor*{\frac12 \log_2 n}, \ceil*{\frac12 \log_2 n}}$ be the integer closest to $\frac12 \log_2 n$, and let $m = \ceil*{n/2^t}$.  Observe that~$n \le m2^t$ and
\[c(m+2^t)+1 \leq c\sqrt{n}\Big(\frac{\sqrt{n}}{2^t}+\frac{2^t}{\sqrt{n}}\Big)+2\leq \frac{3c\sqrt{2}}{2} \sqrt{n} + 2.\]
We can then embed the $n$ vertices of $G$ into the $m2^t$ vertices of this construction, ensuring that $(\vec{0}, m)$ is in the image.
\end{proof}

\cref{prop:UBSparse} now follows immediately from~\cref{prop:UB-2k,prop:UB-SparseCayley}.

\subsection{Lower bound}

As mentioned in~\cref{rem:structural}, we face additional difficulties when trying to prove lower bounds for Dirac graphs. Not only does the relative sparsity make it harder to find switches, but we also do not have the nice structural characterisation of~\cref{lem:noswitches} for switch-free colourings. In this section we show how one can consider a wider class of switches to prove the nontrivial lower bound of~\cref{thm:Dirac+const}. A tighter lower bound for $\rod{n}{\delta}{C_n}$ would likely require new ideas.

\subsubsection{Paths in super-Dirac graphs}

A graph is said to be \emph{Hamilton-connected} if any pair of vertices is connected by a Hamilton path. Every Hamilton-connected graph is also Hamiltonian, but the converse is not true. We know from Dirac's theorem~\cite{dirac1952some} that every $n$-vertex graph $G$ with $\delta(G)\geq n/2$  contains a Hamilton cycle; Ore~\cite{ore1963hamilton} proved that a slightly stronger condition on the minimum degree implies Hamilton-connectedness.

\begin{theorem}[Ore~\cite{ore1963hamilton}] \label{lem:HamiltonConnected}
Let $n\geq 3$ and $G$ be an $n$-vertex graph. If $d(u) + d(v) \geq n + 1$ for 
every  $u,v\in V(G)$ with~$uv \notin E(G)$, then $G$ is Hamilton-connected. In particular, if $\delta(G)\geq (n+1)/2$, then $G$ is Hamilton-connected.
\end{theorem}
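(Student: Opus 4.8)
The second assertion is immediate: if $\delta(G)\geq (n+1)/2$, then every pair of vertices, adjacent or not, has degree sum at least $n+1$, so it suffices to prove \cref{lem:HamiltonConnected} under the stated condition on non-adjacent pairs. Fix distinct vertices $s,t$; the goal is to produce a Hamilton $s$--$t$ path. The plan is to run the classical edge-maximal/reconnection argument underlying Ore's theorem, adapted so that both endpoints stay pinned throughout.

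First I would pass to a counterexample $G$ that is edge-maximal subject to containing no Hamilton $s$--$t$ path. Adding edges only increases degrees, so $G$ still satisfies the hypothesis, and since $K_n$ is Hamilton-connected for $n\geq 3$ we have $G\neq K_n$. Pick any non-edge $ab$. By maximality $G+ab$ has a Hamilton $s$--$t$ path $P=w_1w_2\cdots w_n$ (with $w_1=s$, $w_n=t$), and $P$ must use $ab$, say $ab=w_kw_{k+1}$ with $a=w_k$ and $b=w_{k+1}$; every other edge of $P$ already lies in $G$. Thus $G$ contains a spanning $s$--$t$ path with a single ``gap'' at the non-edge $w_kw_{k+1}$, and the task reduces to repairing this gap inside $G$.

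The heart of the argument is two endpoint-preserving reconnection moves. If there is an index $j$ with $k+1<j<n$ such that $w_kw_j,\,w_{k+1}w_{j+1}\in E(G)$, then $w_1\cdots w_k\,w_j w_{j-1}\cdots w_{k+1}\,w_{j+1}\cdots w_n$ is a genuine Hamilton $s$--$t$ path of $G$; a mirror move, using edges $w_{k+1}w_j$ and $w_kw_{j-1}$, handles crossings to the left of the gap. Either move contradicts the choice of $G$, so none can occur. Encoding the neighbourhoods along $P$ as $Y=\set{ j : w_j\in N(a)}$ and $X=\set{ j : w_j\in N(b)}$, and writing $X-1:=\set{ j-1 : j\in X}$, the absence of both families of moves, together with the trivial facts $a\not\sim a$, $b\not\sim b$, $a\not\sim b$, forces $Y$ and $X-1$ to be disjoint. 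Both sets avoid the index $k$ (indeed $k\notin Y$ as $a\not\sim a$, and $k\notin X-1$ as $b\not\sim b$), so they are disjoint subsets of $\set{0,1,\dots,n}\setminus\set{k}$, whence $d(a)+d(b)=|Y|+|X-1|\leq n$. This contradicts $d(a)+d(b)\geq n+1$ and completes the proof.

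I expect the main difficulty to be the index bookkeeping in this final step rather than any single conceptual leap. Because $s$ and $t$ are fixed, the non-adjacent pair $a,b$ sits in the \emph{interior} of $P$, unlike the Hamilton-cycle version of Ore's theorem where the pair lies at the ends of a longest path; one must therefore craft reconnection moves that never disturb $w_1$ and $w_n$, and separately check the boundary positions $k=1$ and $k=n-1$ (where $a=s$ or $b=t$). The subtle point is that a naive count over $\set{0,\dots,n}$ would yield only $d(a)+d(b)\leq n+1$, which is not a contradiction; it is precisely the automatic exclusion of the index $k$ from $Y\cup(X-1)$ that sharpens the bound to $n$ and closes the argument.
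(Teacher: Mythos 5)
The paper itself offers no proof of this statement --- it is imported verbatim from Ore's 1963 paper and used as a black box --- so your write-up is by necessity a different route, namely a self-contained proof. It is correct. It is the classical edge-maximality/crossing-pairs argument for Ore-type theorems, properly adapted so that the two endpoints stay pinned: (i) passing to an edge-maximal counterexample is legitimate, since adding edges only strengthens the degree-sum hypothesis and $K_n$ is Hamilton-connected for $n\ge 3$, so a maximal counterexample has a non-edge $ab$ and a spanning $s$--$t$ path broken only at $ab=w_kw_{k+1}$; (ii) both reconnection moves really do yield Hamilton $s$--$t$ paths of $G$, since every edge used other than the two crossing edges is an edge of $P$ different from $w_kw_{k+1}$, and the moves degenerate gracefully in the boundary cases $k=1$ and $k=n-1$ (one of the two families is simply vacuous there, so nothing extra is needed even though $a$ or $b$ may coincide with an endpoint); and (iii) the count closes: any $j\in Y\cap(X-1)$ with $j\ge k+2$ or $j\le k-2$ triggers a move, while $j\in\{k-1,k,k+1\}$ is ruled out by $a\not\sim b$, $a\not\sim a$, and $a\not\sim b$ respectively, and $k\notin Y\cup(X-1)$ by loop-freeness, so $d(a)+d(b)=\card*{Y}+\card*{X-1}\le\card*{\{0,\dots,n\}\setminus\{k\}}=n$, contradicting the hypothesis. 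Your closing observation is also the right one to emphasise: the exclusion of the index $k$ is exactly what improves the trivial bound $n+1$ to $n$. One cosmetic quibble: the direct adaptation you chose is indeed the natural one here, since the usual reduction of Hamilton-connectivity to Hamiltonicity (adding an auxiliary vertex joined only to $s$ and $t$) destroys the degree condition; it might be worth a sentence saying so.
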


Ore's result immediately leads to the following corollary. We note that this result requires the strictest bound on~$\delta(G)$, and so if one wishes to improve~\cref{thm:Dirac+const}, this corollary should be improved or circumvented.

\begin{corollary} \label{cor:HamiltonPath}
    Let $G$ be an $n$-vertex graph with $\delta(G) \ge \frac{n+t+1}{2}$. Then, for any $a, b \in V(G)$ and $S \subseteq V(G) \setminus \{a,b\}$ with $\card{S} \le t$, there is an $\set{a,b}$-path $P$ with $V(P) = V(G) \setminus S$.
\end{corollary}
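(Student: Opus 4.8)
The plan is to reduce directly to Ore's theorem (\cref{lem:HamiltonConnected}) by deleting the forbidden set. Set $G' = G - S$ and write $n' = v(G') = n - \card{S}$. Since $S \subseteq V(G) \setminus \{a,b\}$, both $a$ and $b$ survive in $G'$, and any Hamilton path of $G'$ joining $a$ and $b$ is precisely an $\set{a,b}$-path $P$ with $V(P) = V(G) \setminus S$. Hence it suffices to prove that $G'$ is Hamilton-connected.

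The one computation to carry out is verifying that $G'$ meets the minimum-degree threshold of \cref{lem:HamiltonConnected}. Deleting the $\card{S} \le t$ vertices of $S$ lowers each surviving degree by at most $\card{S}$, so
\[
\delta(G') \ge \delta(G) - \card{S} \ge \frac{n+t+1}{2} - \card{S}.
\]
We want this to be at least $(n'+1)/2 = (n - \card{S} + 1)/2$. After clearing denominators, the inequality $\frac{n+t+1}{2} - \card{S} \ge \frac{n - \card{S} + 1}{2}$ simplifies to $t \ge \card{S}$, which holds by hypothesis. Thus $\delta(G') \ge (n'+1)/2$, and \cref{lem:HamiltonConnected} yields that $G'$ is Hamilton-connected.

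There is no genuinely hard step here; the only thing worth checking is that Ore's theorem is applicable, i.e. that $n' \ge 3$. This is automatic, since $\delta(G) \le n-1$ always forces the hypothesis $\delta(G) \ge (n+t+1)/2$ to give $t \le n-3$, whence $n' = n - \card{S} \ge n - t \ge 3$. The reduction is therefore valid, and taking the Hamilton path of $G'$ between $a$ and $b$ furnishes the desired path $P$.
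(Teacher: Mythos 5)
Your proof is correct and follows exactly the same route as the paper: delete $S$, verify that $\delta(G-S)\ge\tfrac12(v(G-S)+1)$, and invoke Ore's theorem to get a Hamilton $\{a,b\}$-path in $G-S$. The extra check that $v(G-S)\ge 3$ is a nice touch that the paper leaves implicit.
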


\begin{proof}
    Consider the graph $G' = G \setminus S$. We then have $v(G') = n - \card{S}$ and $\delta(G') \ge \delta(G) - \card{S}$, and since $\card{S} \le t$, it follows that $\delta(G') \ge \frac12 ( v(G') + 1)$. Thus, by Lemma~\ref{lem:HamiltonConnected}, $G'$ has a Hamilton~$\set{a,b}$-path $P$, which is a path in $G$ with $V(P) = V(G') = V(G) \setminus S$.
\end{proof}

In addition to long paths between pairs of vertices, as in \cref{cor:HamiltonPath}, we will also need to ensure the existence of very short paths that avoid a given vertex set. This is the content of the next lemma.

\begin{lemma} \label{lem:ShortPaths}
    Let $G$ be an $n$-vertex graph with $\delta(G) \ge \frac{n+t-1}{2}$. Then, for any $a,b \in V(G)$ and~$S \subseteq V(G) \setminus \{a,b\}$ with $\card{S} \le t$, there is an $\set{a,b}$-path of length at most two that is disjoint from $S$.
\end{lemma}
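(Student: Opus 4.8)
The plan is to reduce everything to a short counting argument on common neighbourhoods. First I would dispose of the trivial case: if $ab \in E(G)$, then the single edge $ab$ is already an $\set{a,b}$-path of length one, and it is disjoint from $S$ because $a,b \notin S$ by hypothesis. So I may assume henceforth that $ab \notin E(G)$, and the goal becomes finding a common neighbour $c$ of $a$ and $b$ with $c \notin S$; the cherry $acb$ will then be the desired path of length two.

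Next I would bound the number of common neighbours from below. Since $ab \notin E(G)$, we have $b \notin N(a)$ and $a \notin N(b)$, so both $N(a)$ and $N(b)$ are contained in $V(G) \setminus \set{a,b}$, a set of size $n-2$. By inclusion--exclusion, $\card{N(a) \cap N(b)} \geq d(a) + d(b) - (n-2)$. Applying the minimum-degree hypothesis $\delta(G) \ge \tfrac{n+t-1}{2}$ to each of $a$ and $b$ yields $\card{N(a) \cap N(b)} \ge (n+t-1) - (n-2) = t+1$.

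Finally, since there are at least $t+1$ common neighbours while $\card{S} \le t$, at least one common neighbour $c$ must lie outside $S$. Any such $c$ is automatically distinct from $a$ and $b$, because $c \in N(a) \cap N(b)$, so $acb$ is an $\set{a,b}$-path of length two that is disjoint from $S$, as required.

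As for the main obstacle, there is really none of substance: the statement is a direct application of the common-neighbourhood count, with the threshold $\tfrac{n+t-1}{2}$ calibrated precisely so that the resulting count $t+1$ exceeds $\card{S} \le t$. The only points needing a little care are treating the edge case $ab \in E(G)$ separately (so that the inclusion--exclusion bound, which implicitly uses $ab \notin E(G)$ to place both neighbourhoods inside an $(n-2)$-element set, is valid), and verifying that the chosen vertex $c$ is a genuine new internal vertex rather than one of $a$ or $b$.
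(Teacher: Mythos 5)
Your proof is correct and follows exactly the same route as the paper's: handle the case $ab \in E(G)$ trivially, then use inclusion--exclusion on $N(a)$ and $N(b)$ inside the $(n-2)$-element set $V(G)\setminus\{a,b\}$ to get at least $t+1$ common neighbours, one of which avoids $S$. No differences worth noting.
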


\begin{proof}
    If $ab\in E(G)$, we are done, so suppose otherwise. By the degree condition, $a$ and $b$ each have at least $\frac{n+t-1}{2}$ neighbours in the $(n-2)$-element set $V(G) \setminus \{a,b\}$, and so  they must have at least $t+1$ common neighbours. At least one of these common neighbours lies outside of $S$, giving a path of length two avoiding $S$, as needed.
\end{proof}

\subsubsection{Odd-coloured cycles}

Recall that we call a cycle in a $2$-edge-coloured graph odd-coloured if it contains an odd number of edges in both colours, and even-coloured otherwise. The next lemma shows that finding a single switch is enough to build an even-coloured Hamilton cycle.  

\begin{proposition} \label{prop:C4Switch}
    For an even integer $n\geq 4$, let $G$ be an $n$-vertex graph with $\delta(G) \ge \frac{n}{2} + 2$, and suppose $E(G)$ is $2$-coloured. If there is an odd-coloured $4$-cycle, then $G$ contains an even-coloured Hamilton cycle.
\end{proposition}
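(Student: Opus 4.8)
The plan is to exploit the fact that an odd-coloured $4$-cycle is precisely a switch, and to embed it into a Hamilton cycle in a way that lets us correct the global colour parity by toggling between its base and flipped matchings. First I would record the local structure: since a $4$-cycle has four edges and is odd in both colours, one colour appears exactly once and the other three times, so it is a $\set{1,2}$-switch $S = uvxyu$. The crucial feature, as in \cref{fig:UseSwitch}, is that replacing the base matching $\set{uv,xy}$ by the flipped matching $\set{uy,vx}$ flips the parity of \emph{both} colours simultaneously while leaving every other edge untouched. Because $n$ is even, any Hamilton cycle of $G$ is either odd in both colours or even in both colours; hence if I can produce two Hamilton cycles differing only in whether they use the base or the flipped matching of $S$, exactly one of them is even-coloured, and I am done.

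To build such a pair, I would connect the two ``diagonal'' vertices $u$ and $x$ through the rest of the graph and join the remaining two switch vertices $v,y$ by a long spanning path. Concretely, taking a path $P$ from $v$ to $y$ together with a short $u$--$x$ connection disjoint from it, the walks $u\,v\,P\,y\,x\cdots u$ (base) and $u\,y\,P\,v\,x\cdots u$ (flipped) are two Hamilton cycles that agree off $S$. The one subtlety is the $u$--$x$ connection: the diagonal $ux$ need not be an edge of $G$. I would therefore route it through a common neighbour $w$ of $u$ and $x$, using the length-two path $u\,w\,x$. Such a $w$ exists outside $\set{v,y}$ because $u$ and $x$ have at least $2\delta(G)-n \ge 4$ common neighbours, two of which are $v$ and $y$. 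It then remains to find the spanning path $P$: applying \cref{cor:HamiltonPath} with $a=v$, $b=y$ and $S=\set{u,x,w}$ (so $|S|=3$) yields a $\set{v,y}$-path $P$ on $V(G)\setminus\set{u,x,w}$, which is valid because $\delta(G)\ge \tfrac n2+2 = \tfrac{n+3+1}{2}$ meets the hypothesis of the corollary exactly for $t=3$.

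Finally I would verify the parity bookkeeping and conclude. The base cycle $u\,v\,P\,y\,x\,w\,u$ and the flipped cycle $u\,y\,P\,v\,x\,w\,u$ share the edges of $P$ together with $xw$ and $wu$, and differ only in $\set{uv,xy}$ versus $\set{uy,vx}$; since these two pairs partition the four edges of the switch $S$, and $S$ is odd in both colours, the two cycles have opposite parity in each colour. Thus whichever of the two is not odd-coloured is an even-coloured Hamilton cycle. The main obstacle---and the reason the hypothesis is $\tfrac n2+2$ rather than $\tfrac n2$---is precisely handling the possibly-absent diagonal: the extra two in the minimum degree is exactly what simultaneously guarantees a common neighbour $w$ avoiding $\set{v,y}$ and leaves enough degree slack to apply \cref{cor:HamiltonPath} after deleting the three vertices $u,x,w$. (For $n=4$ the hypothesis $\delta(G)\ge \tfrac n2+2$ cannot be satisfied, so the statement holds vacuously.)
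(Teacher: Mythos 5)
Your proof is correct and follows essentially the same route as the paper's: a short connector through a common neighbour (the content of \cref{lem:ShortPaths}), a spanning path from \cref{cor:HamiltonPath} on the remaining vertices, and the observation that the two resulting Hamilton cycles differ exactly by the odd-coloured $4$-cycle, so (as $n$ is even) one of them is even-coloured. The only cosmetic difference is that you swap the roles of the diagonal pairs, putting the length-two connection between $u$ and $x$ and the long path between $v$ and $y$, whereas the paper does the reverse.
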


\begin{proof}
    The odd-coloured $4$-cycle, say $uvxyu$, is a switch, and so we will be done if we can embed it in a Hamilton cycle. By applying~\cref{lem:ShortPaths} with $a = v$, $b = y$, and $S = \{u,x\}$, we find a $\set{v,y}$-path $Q$ of length at most two that does not contain $u$ or $x$. Next, by applying~\cref{cor:HamiltonPath} with $a = x$, $b = u$, and $S = V(Q)$, we find an $\set{x,u}$-path~$P$ with $V(P) = V(G) \setminus V(Q)$.

    Now we obtain two Hamilton cycles by traversing the base or flipped matching of the switch, namely $C_1 = uvQyxPu$ and $C_2 = uyQ vxP u$. The symmetric difference of~$C_1$ and~$C_2$ is the $4$-cycle $uvxyu$, which is odd-coloured, and hence the parities of the colour classes in $C_1$ and $C_2$ must be different. In particular, as $n$ is even, one of the two cycles is even-coloured.
\end{proof}

As mentioned earlier, in the sparse case the absence of switches does not give us enough structural information to conclude the proof, so we need another approach. The next lemma makes use of what is essentially a different type of switch, constructed from a $C_6$ rather than a $C_4$.

\begin{proposition} \label{prop:C6Switch}
    Let $G$ be an $n$-vertex graph with $\delta(G) \ge \frac{n}{2} + 4$, and suppose $E(G)$ is $2$-coloured. If there is an odd-coloured $6$-cycle, then $G$ contains an even-coloured Hamilton cycle.
\end{proposition}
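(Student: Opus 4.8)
The plan is to imitate the proof of \cref{prop:C4Switch}, using the odd-coloured $6$-cycle in place of the $4$-cycle switch. Write the cycle as $v_1v_2v_3v_4v_5v_6v_1$ and split its six edges into the two alternating perfect matchings $M_1 = \{v_1v_2, v_3v_4, v_5v_6\}$ and $M_2 = \{v_2v_3, v_4v_5, v_6v_1\}$ of $\{v_1,\dots,v_6\}$. The crucial observation is that $M_1 \triangle M_2$ is exactly the odd-coloured $6$-cycle. So if I can build two Hamilton cycles $H_1 \supseteq M_1$ and $H_2 \supseteq M_2$ that agree off the $6$-cycle, then $H_1 \triangle H_2$ will be the $6$-cycle, and exactly as in \cref{prop:C4Switch} the colour parities of $H_1$ and $H_2$ will differ in each colour; since $n$ is even, one of the two must then be even-coloured.

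To make $H_1$ and $H_2$ agree outside the cycle, I would link the six vertices by a common ``frame'' of three vertex-disjoint paths spanning $V(G)$, with endpoint pairs given by the three antipodal pairs $(v_1,v_4)$, $(v_2,v_5)$, $(v_3,v_6)$. Concretely, I first find a short $\{v_2,v_5\}$-path $Q_1$ of length at most two avoiding $\{v_1,v_3,v_4,v_6\}$ via \cref{lem:ShortPaths} (here $\card{S}=4$, requiring $\delta(G)\ge \tfrac{n+3}{2}$), then a short $\{v_3,v_6\}$-path $Q_2$ of length at most two avoiding $\{v_1,v_2,v_4,v_5\}\cup V(Q_1)$ (here $\card{S}\le 5$, requiring $\delta(G)\ge\tfrac{n+4}{2}$), and finally a spanning $\{v_1,v_4\}$-path $P$ with $V(P) = V(G)\setminus (V(Q_1)\cup V(Q_2))$ via \cref{cor:HamiltonPath}. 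Since $Q_1$ and $Q_2$ together use at most six vertices, the set to be avoided by $P$ has size at most $6$, so \cref{cor:HamiltonPath} applies provided $\delta(G)\ge \tfrac{n+7}{2}$. As $n$ is even, the hypothesis $\delta(G)\ge \tfrac n2 +4 = \tfrac{n+8}{2}$ exceeds this, so all three applications are valid; this bookkeeping, guaranteeing that the spanning path can be routed through all the leftover vertices, is the main point of the argument and explains the exact degree bound.

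With the frame $\{P,Q_1,Q_2\}$ in hand, I form $H_1 := M_1 \cup \{P,Q_1,Q_2\}$ and $H_2 := M_2 \cup \{P,Q_1,Q_2\}$. Contracting each frame path to a single edge joining its endpoints turns each $H_i$ into the union of the matching $M_i$ with the three antipodal edges $v_1v_4,v_2v_5,v_3v_6$ on the vertex set $\{v_1,\dots,v_6\}$. A direct check shows that both $M_1 \cup \{v_1v_4,v_2v_5,v_3v_6\}$ (tracing $v_1v_2v_5v_6v_3v_4v_1$) and $M_2 \cup \{v_1v_4,v_2v_5,v_3v_6\}$ (tracing $v_1v_6v_3v_2v_5v_4v_1$) are single $6$-cycles. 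Hence, after un-contracting, each $H_i$ is a single cycle passing through every vertex of $G$, that is, a Hamilton cycle.

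Finally, since $H_1$ and $H_2$ share the three frame paths, these cancel in the symmetric difference, so $H_1 \triangle H_2 = M_1 \cup M_2$ is precisely the odd-coloured $6$-cycle. Exactly as in \cref{prop:C4Switch}, it follows that the parities of the colour classes of $H_1$ and $H_2$ differ in each of the two colours; since $n$ is even, one of the two Hamilton cycles must therefore be even-coloured, completing the proof. The one genuinely delicate step, as noted above, is the degree accounting that lets the spanning path $P$ absorb all remaining vertices while dodging the (up to six) vertices tied up in the two short connector paths.
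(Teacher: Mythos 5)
Your proof is correct and follows essentially the same route as the paper's: two short connector paths from \cref{lem:ShortPaths}, one spanning path from \cref{cor:HamiltonPath}, and two Hamilton cycles whose symmetric difference is the odd-coloured $6$-cycle, forcing one of them to be even-coloured. The only (immaterial) difference is the choice of frame: you pair the three antipodal vertices $(v_1,v_4),(v_2,v_5),(v_3,v_6)$, whereas the paper uses one antipodal pair and two ``short-chord'' pairs; both matchings combine with each alternating matching of the hexagon to give a single cycle, and the degree bookkeeping is identical.
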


\begin{proof}
    Let the odd-coloured $6$-cycle be $uvwxyzu$. Applying~\cref{lem:ShortPaths} with $a = v$, $b = z$, and $S = \{u, w, x, y\}$, we find a $\set{v,z}$-path $Q_1$ of length at most two that is disjoint from the rest of the $6$-cycle. Applying the lemma once again, this time with $a = y$, $b = w$, and $S = \{u, x\} \cup V(Q_1)$, we find a $\set{y,w}$-path $Q_2$  of length at most two that is disjoint from the rest of the $6$-cycle and the path $Q_1$. Finally, we can apply~\cref{cor:HamiltonPath} with $a = x$, $b = u$, and~$S = V(Q_1) \cup V(Q_2)$ to find an $\set{x,u}$-path $P$ that covers all the vertices in the graph except those on the paths~$Q_1$ and~$Q_2$.

    Now consider the Hamilton cycles $C_1 = uvQ_1zyQ_2wxPu$ and $C_2 = uzQ_1v wQ_2yxP u$. The symmetric difference of these paths is the $6$-cycle $uvwxyzu$, which is odd-coloured, and thus one of the two cycles must be even-coloured.
\end{proof}

\subsubsection{Odd-Ramsey numbers in super-Dirac graphs} 

Armed with these preliminaries, we can now show that $n$-vertex graphs with minimum degree~$\frac{n}{2} + 4$ require at least three colours in any odd-Ramsey colouring.

\begin{proof}[Proof of~\cref{thm:Dirac+const}]
    Suppose for a contradiction that we have an $n$-vertex graph $G$ with $\delta(G) \ge \frac{n}{2} + 4$, together with a $2$-edge-colouring in which every Hamilton cycle is odd-coloured.

    Consider two vertices $x, y \in V(G)$.  If there is one common neighbour~$u$ that sends edges of opposite colours to~$x$ and~$y$, and one common neighbour~$v$ that sends edges of the same colour to~$x$ and~$y$, then $uxvyu$ forms an odd-coloured $4$-cycle. By~\cref{prop:C4Switch},~$G$ contains an even-coloured Hamilton cycle, a contradiction. It follows that either all common neighbours send edges of the same colour to $x$ and $y$, in which case we say $x$ and $y$ \emph{agree}, or all common neighbours send edges of opposite colours to $x$ and $y$, in which case we say $x$ and $y$ \emph{disagree}.

    We now argue that agreeing defines an equivalence relation on $V(G)$. Suppose this were not the case. Then we would have vertices~$x$,~$y$, and~$z$, such that~$x$ and~$y$ agree, and~$y$ and~$z$ agree, but~$x$ and~$z$ disagree. By the minimum degree condition, each pair has at least four common neighbours; thus, we can find common neighbours~$u$ of~$x$ and~$y$,~$v$ of~$y$ and~$z$, and~$w$ of~$x$ and~$z$ such that~$u,v$, and~$w$ are distinct and different from~$x,y$, and~$z$.
    Then $xuyvzwx$ forms a $6$-cycle. Since~$x$ and~$y$ agree, the colours of~$xu$ and~$uy$ are the same, and this is similarly true of the pair of edges~$yv$ and~$vz$. However, since~$x$ and~$z$ disagree, the edges~$zw$ and~$wx$ have differing colours, and thus this is an odd-coloured $6$-cycle. By~\cref{prop:C6Switch}, we would then find an even Hamilton cycle.

    Thus, the pairs of agreeing vertices form an equivalence relation. Suppose we had at least three equivalence classes, and let $x$, $y$, and $z$ come from different classes, so that each pair disagrees. As above, we can find distinct vertices $u$, $v$, and $w$ such that $xuyvzwx$ forms a $6$-cycle. Since $x$, $y$, and $z$ pairwise disagree, it follows that  the set $\set{xu, uy, yv, vz, zw, wx}$ contains precisely three edges of each colour, and then we once again find an even Hamilton cycle, courtesy of~\cref{prop:C6Switch}.

    This implies that the vertices $V(G)$ can be partitioned into two equivalence classes $A$ and $B$ (one of which may be empty), such that all pairs within $A$ and within $B$ agree, while all pairs between $A$ and $B$ disagree.

    Finally, given any Hamilton cycle $C = v_1 v_2 \hdots v_nv_1$ in $G$, we count the number of edges in $C$ of each colour by considering  the vertices in even positions, that is, $v_2, v_4, \hdots, v_n$. The cyclic sequence $v_2, v_4, \dots, v_n$ must cross between the classes $A$ and $B$ an even number of times; therefore an even number of pairs $v_{2i} v_{2i+2}$ (addition modulo $n$) disagree, while the remaining pairs agree. But every agreeing pair contributes an even number of red edges and an even number of blue edges in $C$, while every disagreeing pair contributes one edge of each colour. As there is an even number of such pairs, it follows that $C$ is even-coloured, which is a contradiction.
\end{proof}

\section{Concluding remarks}\label{sec:ConcludingRemarks}

In this paper, we studied odd-Ramsey numbers of Hamilton cycles, both when the host graph is complete and in the sparser setting of (super-)Dirac graphs. We resolved the former problem up to a constant factor, and gave some initial results in the latter setting. A number of possible further directions arise from our work.

\paragraph{Closing the gap in \cref{thm:complete_graphs}} We have determined the odd-Ramsey number of Hamilton cycles up to a factor of $3$, and it would be very interesting to see which of our bounds, if either, is closer to the truth.

There are two main bottlenecks when it comes to improving the constant in the lower bound of~\cref{thm:complete_graphs}. The first is the construction of the SPICy starter, where we use~\cref{lem:longevencycles} to find long monochromatic cycles of even lengths. While~\cref{lem:longevencycles} is tight, we do not actually require the cycles to be monochromatic --- it suffices for the union of the cycles to be even-coloured, and for the path of length $2r$ on each cycle to be built out of monochromatic cherries. The second issue is that we potentially use $(2+o(1))r^2$ vertices when finding and attaching switches to the SPICy, and need to ensure we have sufficiently many vertices remaining for the rest of the argument. Thus, one could potentially narrow the gap by finding more efficient means of building a SPICy starter, and then embedding switches in it.

On the other hand, the upper bound comes from a rather surprising construction that modifies a Cayley colouring over a finite field at a single vertex, using (roughly) equal-sized sets of colours for both parts. We would like to live in a world where this colouring is optimal, and thus pose the potentially provocative question below.

\begin{question}\label{conj:BetterLower}
Is it true that, for every $\varepsilon>0$ and every sufficiently large even integer $n$, we have
\[\ro{n}{C_n}\geq (2-\varepsilon) \sqrt{n}\ ?\]
\end{question}
In conjunction with the upper bound of~\cref{prop:UpperboundComplete} for $n = 4^t$, a positive answer to~\cref{conj:BetterLower} would imply \[\liminf_{k\to\infty}\frac{r_{odd}(2k,C_{2k})}{\sqrt{2k}}=2.\]

\paragraph{Hamilton paths}

In this paper we focused on odd-Ramsey numbers of Hamilton cycles, but the analogous question for Hamilton paths when the number of vertices is odd is equally interesting. Note that our lower bound still holds; \cref{thm:SqrtLowerBound} shows that, in any edge-colouring of $K_n$ with fewer than $\parens*{\frac{\sqrt{2}}{2} + o(1)}\sqrt{n}$ colours, there is a Hamilton cycle with one odd colour class. Removing an edge from said colour class then provides an even-coloured Hamilton path, showing $\ro{n}{P_n} = \Omega(\sqrt{n})$. We believe that the behaviour of Hamilton paths and cycles should not be very different, but we have not been able to find a matching construction to establish the upper bound.

\begin{question}\label{question:Path}
    Is it true that $\ro{n}{P_n} = O(\sqrt{n})$?
\end{question}

\paragraph{Sparse odd-Ramsey}
In \cref{sec:SparserGraphs} we made some initial progress towards understanding odd-Ramsey numbers when the host graph is not complete. 
For sparse host graphs, we know from~\cref{prop:UB-2k} that ${\rod{n}{\frac n2 +k-1}{C_n}\leq 2k}$ for every integer $k\leq\frac{n}{2}$, while~\cref{prop:UB-SparseCayley} shows $\rod{n}{cn-1}{C_n} \le \frac{3c\sqrt{2}}{2}\sqrt{n} + 2$ for all $c \in [\frac12, 1]$. Meanwhile,~\cref{thm:Dirac+const} shows $\rod{n}{\frac n2+4}{C_n}\geq 3$. Many questions remain open; we highlight three of them. 

A first natural problem is to understand the true behaviour of $\rod{n}{\frac n2 + k}{C_n}$ as a function of $k$. In particular, our upper bound grows linearly in $k$ with slope $2$ for $k = O(\sqrt{n})$, and with slope $\frac{3}{\sqrt{2n}}$ for larger values of $k$. It would be interesting to prove better upper bounds for these ranges of $k$.

As a second problem, we wonder how $\rod{n}{n-t}{C_n}$ differs from $\ro{n}{C_n}$ when $t$ is small. As a first open case, we propose studying whether or not $\ro{K_n}{C_n}$ is different from $\ro{K_n-M}{C_n}$, where $M$ is a perfect matching. 

Finally, we know that $\rod{n}{\frac n2}{C_n}=2$ and that $\rod{n}{\frac n2 + 4}{C_n}\geq 3$. We tend to believe that the transition from two to three happens already at $\frac n2+1$, but we have been unable to show it. To reduce the minimum degree required for the lower bound, one would need to improve our use of Ore's result guaranteeing Hamilton-connectivity.

In general, proving lower bounds in this setting seems considerably more difficult than when the host graph is complete. One reason is that~\cref{lem:noswitches} provides a nice structural result for switch-free colourings of complete graphs, but there is no natural analogue of this result in the sparse setting.

In a very different direction, it would be interesting to understand the behaviour of odd-Ramsey numbers with respect to Hamilton cycles when the host graph is random. 
\begin{question}
  What is $\ro{G}{C_n}$ when $G\sim G(n,p)$ for different values of $p$? What is the threshold for~$\ro{G}{C_n}>2$?
\end{question}

Note that, while a random graph $G$ is almost surely Hamiltonian as soon as its minimum degree is at least two, this is not enough to ensure $\ro{G}{C_n} > 2$. Indeed, given a vertex $v$ of degree two, colour one of its incident edges blue, and all other edges red. Then every Hamilton cycle must contain this blue edge, and thus has an odd colour class. Given the thresholds for $G(n,p)$ to have minimum degree two or three, see e.g.~\cite{janson2011random}, this implies that, when $\frac{\log n + \log \log n}{n} < p < \frac{\log n + 2 \log \log n}{n}$, the random graph $G \sim G(n,p)$ will almost surely be Hamiltonian, but have~$\ro{G}{C_n} = 2$.

\paragraph{Unique colourings}

Here we discuss one more variant of the \Erdos-\Gyarfas\  problem, concerned with unique-chromatic colourings, that we believe is worth investigating.
The following definition appears in~\cite{yip2024k8}, but has been considered earlier by Radoicic and by Axenovich and Conlon in unpublished work.

\begin{definition}
    Given an edge-colouring of $K_n$, a copy of $H$ in $K_n$ is \emph{unique-chromatic} if it meets some colour class in precisely one edge. An edge-colouring of $K_n$ is \emph{$H$-unique} if all copies of $H$ in $K_n$ are unique-chromatic, and we write $r_u(n,H)$ for the smallest number of colours needed in an $H$-unique colouring of $K_n$.
\end{definition}

Trivially, $\ro{n}{H}\leq r_u(n,H)$ for any choice of $n$ and $H$, and one advantage of $r_u(n,H)$ is that the problem is interesting irrespective of the parity of the number of edges of $H$.
Some first results regarding this parameter appear in~\cite{BDLP2024OddBip,yip2024k8}. In light of the topic of this paper, we propose investigating $r_u(n,C_n)$. 

\begin{question}
    Determine $r_u(n,C_n)$. How large is the ratio $\frac{r_u(n,C_n)}{\ro{n}{C_n}}$?
\end{question}
In~\cite{BDLP2024OddBip}, we show that the two parameters can be a constant factor apart for the family of all cliques, but we wonder if they differ by much more for Hamilton cycles. We tend to think that $r_u(n,C_n)$ should be linear in $n$, which, if true, would show a large separation between the two parameters.

\paragraph{Acknowledgements.}
We thank Anurag Bishnoi and Carla Groenland for fruitful discussions, and especially for bringing~\cite{li2023eigenvalues} to our attention and for raising~\cref{question:Path}.

SB: Parts of this research were conducted while the author was at the School of Mathematics, University of Birmingham, Birmingham, United Kingdom. The research leading to these results was supported by EPSRC, grant no.\ EP/V048287/1 and by ERC Advanced Grants ``GeoScape'', no.\ 882971 and ``ERMiD'', no.\ 101054936. There are no additional data beyond that contained within the main manuscript.

SD: Research supported by Taiwan NSTC grant 113-2628-M-002-008-MY4.

KP: This project has received funding from the European Union’s Horizon 2020 research and innovation programme
under the Marie Sk{\l}odowska-Curie grant agreement No 101034413.\includegraphics[width=5.5mm, height=4mm]{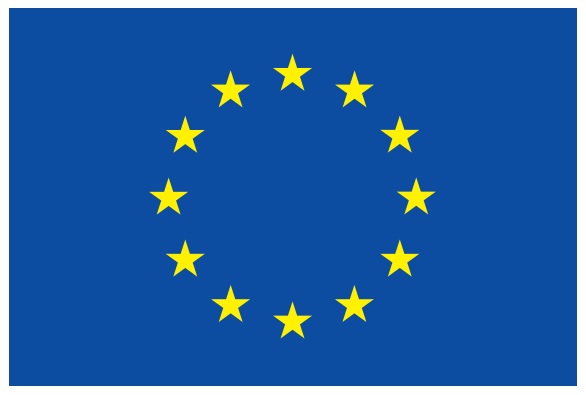}

\bibliographystyle{plain}
\bibliography{bibliography.bib}

\end{document}